\newenvironment{dedication}
        {\vspace*{0.1ex}
        \begin{quotation}\begin{center}\begin{em}}
        {\par\end{em}\end{center}\end{quotation}}
\def\lab{\label}
\def\n{\bf N}
\def\s{\bf S}
 \numberwithin{equation}{section}
 \definecolor{db}{rgb}{0.0,0.0,0.8} 
\definecolor{dg}{rgb}{0.0,0.55,0.14}
\definecolor{dr}{rgb}{0.5,0,0.07}
\newtheorem{theorem}{Theorem}
\newtheorem{proposition}{Proposition}[section]
\newtheorem{lemma}[proposition]{Lemma}
\newtheorem{corollary}[proposition]{Corollary}
\theoremstyle{definition}
\theoremstyle{definition}
\theoremstyle{definition}
\theoremstyle{definition}
\theoremstyle{definition}
\theoremstyle{definition}
\newtheorem{remark}[proposition]{Remark}
\newtheorem{theo}[proposition]{Theorem}
\theoremstyle{definition}
\newtheorem{open-problem}{Open Problem}
\newtheorem*{open-problem*}{Open Problem 2'}
\newcounter{step}
\newcommand{\rlemma}[1]{Lemma~\ref{#1}}
\newcommand{\rth}[1]{Theorem~\ref{#1}}
\newcommand{\rprop}[1]{Proposition~\ref{#1}}
 \def\wsp{W^{s,p}}
\def\be{\begin{equation}}
\def\ee{\end{equation}}
\def\bes{\begin{equation*}}
\def\ees{\end{equation*}}
\def\bt{\begin{theorem}}
\def\et{\end{theorem}}
\def\bpr{\begin{proposition}}
\def\epr{\end{proposition}}
\def\bl{\begin{lemma}}
\def\el{\end{lemma}}
\def\bc{\begin{corollary}}
\def\ec{\end{corollary}}
\def\br{\begin{remark}}
\def\er{\end{remark}}
\def\ben{\begin{enumerate}}
\def\bena{\begin{enumerate}[a)]}
\def\een{\end{enumerate}}
\def\bit{\begin{itemize}}
\def\iit{\end{itemize}}
\def\dist{\operatorname{dist}}
\def\det{\operatorname{det}}
\def\Jac{\operatorname{Jac}}
\def\deg{\operatorname{deg}}
\def\Dist{\operatorname{Dist}}
\def\sgn{\operatorname{sgn}}
\DeclareMathAlphabet{\mathonebb}{U}{bbold}{m}{n}
\newcommand{\one}{\ensuremath{\mathonebb{1}}}
\def\R{{\mathbb R}}
\def\C{{\mathbb C}}
\def\Z{{\mathbb Z}}
\def\fo{\forall\, }
\def\va{\varphi}
\def\d{\displaystyle}
\def\im{\imath}
\def\ve{\varepsilon}
\def\p{\partial}
\def\na{\nabla}
\def\so{{\mathbb S}^1}
\def\st{{\mathbb S}^2}
\def\sn{{\mathbb S}^N}
\def\bfs{{\bf s}}
\def\bft{{\bf t}}
\newtheorem{op}{Open Problem}
\newtheorem{appl}{Lemma}
\date{\today}
\title{Distances between homotopy classes of $W^{s,p}(\sn ;{\mathbb S}^N)$}
\author[1,3]{Haim Brezis}
\author[2]{Petru Mironescu}
\author[3]{Itai Shafrir}
\affil[1]{Department of Mathematics, Rutgers
    University,  USA}
\affil[2]{Universit\'e de Lyon,  Universit\'e Lyon 1, CNRS UMR 5208,  Institut Camille Jordan,  69622 Villeurbanne, France}
\affil[3]{Department of Mathematics, Technion - I.I.T., 32 000 Haifa, Israel}
\begin{document}
\maketitle

\begin{dedication}
\hspace{33mm}
{Dedicated to Jean-Michel Coron with esteem and affection} 
\end{dedication}
\section{Introduction}
\lab{s1}

In \cite{bc}, J.-M. Coron and the first author (H. B.) have investigated the existence of multiple ${\mathbb S}^2$-valued harmonic maps. In the process they were led to introduce a concept of topological degree for maps $f\in H^1 ({\mathbb S}^2 ; {\mathbb S}^2)$. Note that such maps need not be continuous and thus the standard degree (defined for continuous maps) is not well-defined. Instead they used Kronecker's formula
\be
\label{wa1}
\deg f=\fint_{{\mathbb S}^2}\det\, (\na f)
\ee
 valid for $f\in C^1({\mathbb S}^2 ; {\mathbb S}^2)$, and a density argument ($C^1({\mathbb S}^2 ; {\mathbb S}^2)$ is dense in $H^1({\mathbb S}^2 ; {\mathbb S}^2)$) due to R. Schoen and K. Uhlenbeck \cite{su}, to assert that $\deg f$, defined by \eqref{wa1}, belongs to $\Z$ for every $f\in H^1({\mathbb S}^2 ; {\mathbb S}^2)$. 

They also used the technique of \enquote{bubble insertion} which allows to modify the degree $d_1$ of a given (smooth) map $f:{\mathbb S}^2\to{\mathbb S}^2$ by changing its values in a small disc $B_\ve(x_0)$. More precisely (see \cite{bc} and \cite{b-L}), for any $\ve>0$ and $d_2\in\Z$ one can construct some $g\in H^1({\mathbb S}^2 ; {\mathbb S}^2)$ such that $g=f$ outside $B_\ve(x_0)$, $\deg g=d_2$, and 
\be
\label{wa2}
\int_{{\mathbb S}^2}|\na g-\na f|^2\leq 8\pi\, |d_2-d_1|+o(1)\text{ as }\ve\to 0
\ee
(in fact \cite{bc} contains a more refined estimate in the spirit of Lemma \ref{ic} below). This kind of argument serves as a major source of inspiration for several proofs in this paper. As we are going to see, estimate \eqref{wa2} provides a useful upper bound for the Hausdorff distance between homotopy classes in $H^1(\st ; \st)$. 

Subsequently the first author and L. Nirenberg \cite{brezisnirenberg1} (following a suggestion of L. Boutet de Monvel and O. Gabber \cite[Appendix]{bgp}) developed a concept of topological degree for map in VMO$\, (\sn ; \sn)$ which applies in particular to the (integer or fractional) Sobolev spaces $\wsp (\sn ; \sn)$ with
\be
\label{ua1}
s>0,\ 1\leq p<\infty\text{ and }sp\geq N.
\ee

This degree is stable with respect to strong convergence in BMO and coincides with the usual degree when maps are smooth. 

In the remaining cases, i.e., when $sp<N$, there is no natural notion of degree. Indeed, one may construct a sequence of smooth maps $f_n:\sn\to\sn$  such that $f_n\to P$ (with $P\in\sn$ a fixed point) in $\wsp$ and $\deg f_n\to\infty$ \cite[Lemma 1.1]{bbmweb}. Therefore, in what follows we make the assumption \eqref{ua1}.

Given any $d\in\Z$, consider the classes
\be
\label{bgm8}
{\cal E}_{d}:=\{ f\in W^{s,p}(\sn ; \sn);\, \deg f=d\};
\ee
these classes depend not only on $d$, but also on $s$ and $p$, but in order to keep notation simple we do not mention the dependence on $s$ and $p$.

These classes are precisely the connected or path-connected components of $\wsp (\sn ; \sn)$. [This was proved in \cite{brezisnirenberg1} in the VMO context, but the proof can be adapted to $\wsp$.] Moreover if $N=1$ we have (see Section \ref{sec:ua2}) 
\be
\label{ua3}
{\cal E}_{d}=\left\{ f;\, f(z)=e^{\im\va(z)}\, z^d,\text{ with } \va\in W^{s,p}(\so ; \R)\right\}.
\ee

Our purpose is to investigate the usual distance and the Hausdorff
distance (in $\wsp$) between the classes ${\cal E}_d$. For that matter
we introduce the $W^{s,p}$-distance between two maps $f,g\in \wsp (\sn
; \sn)$ by 
\be
\label{va1}
d_{\wsp}(f, g):=|f-g|_{\wsp},
\ee 
where for  $h\in \wsp(\sn ; \R^{N+1})$ we let 
\bes
|h|_{\wsp}:=\left\| h-\fint_{\sn} h \right\|_{\wsp},
\ees
and  $\|\ \|_{\wsp}$ is any one of the standard norms on
$\wsp$. Let $d_1\neq d_2$ and  define the following
two quantities:
\be
\label{bgm5}
\dist_{\wsp} ({\cal E}_{d_1}, {\cal E}_{d_2}):=\inf_{f\in{\cal E}_{d_1}}\ \inf_{g\in{\cal E}_{d_2}}\ d_{\wsp}(f, g)\,,
\ee
and
\be
\label{sa1}
\Dist_{\wsp}({\cal E}_{d_1}, {\cal E}_{d_2}):=\sup_{f\in{\cal E}_{d_1}}\ \inf_{g\in{\cal E}_{d_2}}\ d_{\wsp}(f, g)\,.
\ee
It is conceivable that 
\be
\label{eq:sym}
\Dist_{\wsp}({\cal E}_{d_1}, {\cal E}_{d_2})=\Dist_{\wsp}({\cal E}_{d_2}, {\cal E}_{d_1}), \fo d_1, d_2\in\Z,
\ee
but we have not been able to prove this equality (see Open Problem
\ref{sa2} below).
Therefore we consider also the symmetric version of
\eqref{sa1}, which is nothing but the Hausdorff distance between the
two classes: 
\be
\label{bgm6}
H-\dist_{\wsp} ({\cal E}_{d_1}, {\cal E}_{d_2})=
\max\left\{  \Dist_{\wsp}({\cal E}_{d_1}, {\cal E}_{d_2}), \Dist_{\wsp}({\cal E}_{d_2}, {\cal E}_{d_1})  \right\}.
\ee
We should mention that even in cases where we know that
\eqref{eq:sym} holds true, the qualitative properties of the
 two quantities in \eqref{eq:sym} might be
quite different. Consider for example the classes ${\cal E}_{d_1},
{\cal E}_{d_2}$ in $W^{1,1}(\so;\so)$ when $0<d_1< d_2$. It is shown
in \rprop{prop:attain}
 that $\Dist_{W^{1,1}}({\cal E}_{d_1}, {\cal
  E}_{d_2})$ is attained by some $f$ and $g$, while $\Dist_{W^{1,1}}({\cal E}_{d_2}, {\cal
  E}_{d_1})$ is not.

It turns out that  in general the analysis of the usual distance
$\dist_{\wsp}$ is simpler than that of $\Dist_{\wsp}$, so we start with it.
Note  that we  clearly have
\be
\label{vl1}
\dist_{C^0}({\cal E}_{d_1}, {\cal E}_{d_2})=2,\ \fo d_1\neq d_2.
\ee

Indeed, on the one hand we have $\|f-g\|_{C^0}\leq 2$, $\fo f, g$, and on the other hand if $\|f-g\|_{C^0}<2$ then $\deg f=\deg g$. [This is obtained by considering the homotopy $\d H_t=\frac{tf + (1-t)g}{|tf + (1-t)g|}$, $t\in [0,1]$.] By contrast, it was established in \cite{brezisnirenberg1} that surprisingly, when $s=1/2$, $p=2$ and $N=1$ one has $\dist_{H^{1/2}}({\cal E}_1, {\cal E}_0)=0$, and thus $\dist_{\textrm{VMO}}({\cal E}_1, {\cal E}_0)=0$. 
The usual distance $\dist_{\wsp}({\cal E}_{d_1},{\cal E}_{d_2})$ in certain (non-fractional) Sobolev
spaces was investigated  in works by J. Rubinstein and I. Shafrir
\cite{rs}, when $s=1$, $p\geq N=1$, and S. Levi and I. Shafrir
\cite{ls}, when $s=1$, $p\geq N\geq 2$. In particular, they obtained  exact formulas for the distance (see \cite[Remark 2.1]{rs}, \cite[Theorem 3.4]{ls}) and tackled the question whether this distance is achieved (see \cite[Theorem 1]{rs}, \cite[Theorem 3.4]{ls}). Another motivation comes from the forthcoming paper \cite{bms}, where we consider a natural notion of class in $W^{1,1}(\Omega ; \so)$ (with $\Omega\subset\R^N$) and determine the distance between these classes. In particular, Theorem \ref{vo1} is used in \cite{bms}.

Throughout most of the paper we assume that $N=1$. It is only in the
last  two sections that we consider $N\geq 2$.

We pay  special attention to the case where  $s=1$. In this case, we have several sharp results when
we take
\be
\label{ub2}
d_{W^{1,p}}(f, g)=|f-g|_{W^{1,p}}:=\left(\int_{\so}|\dot f-\dot g|^p\right)^{1/p}.
\ee

%
The following result was obtained in \cite{rs}.
\bt
\label{bgl20}
Let $1\leq p<\infty$. We have
\be
\label{bgl30}
\dist_{W^{1,p}} ({\cal E}_{d_1}, {\cal E}_{d_2})=2^{(1/p)+1}\pi^{(1/p)-1}\, |d_1-d_2|,\ \fo d_1, d_2\in\Z.
\ee

In particular
\be
\label{1a1}
\dist_{W^{1,1}}({\cal E}_{d_1}, {\cal E}_{d_2})=4\, |d_1-d_2|,\ \fo d_1, d_2\in\Z.
\ee
\et


For the convenience of the reader, and also because it is used in the proof of Theorem \ref{ub90}, the proof of Theorem \ref{bgl20} is presented in Sections \ref{sec:ua5} and \ref{ub6}.

In view of \eqref{bgl30}, it is natural to ask whether, given $d_1\neq d_2$, the infimum  
\be
\label{ub80}
\inf_{f\in{\cal E}_{d_1}}\ \inf_{g\in{\cal E}_{d_2}}\ d_{W^{1,p}}(f, g)=2^{(1/p)+1}\pi^{(1/p)-1}\, |d_1-d_2|
\ee
is achieved. 
The answer is given by the following result, proved in \cite{rs} when $p=2$.
\bt
\label{ub90}
Let $d_1, d_2\in\Z$, $d_1\neq d_2$.
\ben
\item
When $p=1$, the infimum in \eqref{ub80} is always achieved.
\item
When $1<p<2$, the infimum in \eqref{ub80} is achieved if and only if $d_2=-d_1$.
\item
When $p\geq 2$, the infimum in \eqref{ub80} is not achieved.
\een
\et


We now turn to the case $s\neq 1$.  Here, we will only obtain the
order of magnitude of the distances $\dist_{\wsp}$, and thus our
results are not sensitive to the choice of a specific distance among
various equivalent ones. [However, we will occasionally obtain sharp results for $H^{1/2}(\so ; \so)$ equipped with the Gagliardo distance defined below.] When $0<s<1$ a standard distance is
associated with the Gagliardo $\wsp$ semi-norm
\be
\label{uc8}
d_{W^{s,p}}(f,g):=\left(\int_{\so}\int_{\so}\frac{|[f(x)-g(x)]-[f(y)-g(y)]|^p}{|x-y|^{1+sp}}\, dxdy  \right)^{1/p}\,.
\ee


\bt
\label{uc3} 
We have
\ben
\item
Let $1<p<\infty$. Then 
\be
\label{uc6}
\dist_{W^{1/p, p}} ({\cal E}_{d_1}, {\cal E}_{d_2})=0,\ \fo d_1, d_2\in\Z.
\ee
\item
Let $s>0$ and $1\leq p<\infty$ be such that $sp>1$. Then
\be
\label{ue2}
C'_{s, p}\, |d_1-d_2|^{s}\leq \dist_{\wsp} ({\cal E}_{d_1}, {\cal E}_{d_2})\leq C_{s, p}\,  |d_1-d_2|^s
\ee
for some constants $C_{s,p}, C'_{s, p}>0$.
\een
\et

{}
\medskip

We next investigate the Hausdorff distance $H-\dist_{\wsp}$ (still with $N=1$). 
\bt
\label{vm1}
We have
\ben
\item In $W^{1,1}$,
\be
\label{XX}
\Dist_{W^{1,1}} ({\cal E}_{d_1}, {\cal E}_{d_2})= 2\pi|d_1-d_2|,\  \fo d_1, d_2\in\Z.
\ee
\item
If $1<p<\infty$, then 
\be
\label{uc60}
H-\dist_{W^{1/p, p}} ({\cal E}_{d_1}, {\cal E}_{d_2})\leq C_p\, |d_1-d_2|^{1/p},\ \fo d_1, d_2\in\Z.
\ee
\item
If $s>0$ and $1\leq p<\infty$ are such that $sp>1$,  then
\be
\label{vn1}
\Dist_{W^{s,p}} ({\cal E}_{d_1}, {\cal E}_{d_2})=\infty,\ \fo d_1, d_2\in\Z\text{ such that }d_1\neq d_2.
\ee
\een
\et


%
%
%
%
%
%
%
%

We do not know whether \eqref{uc60} is optimal in the sense that for every $1<p<\infty$  we have
\be
\label{uf3}
 \Dist_{W^{1/p, p}} ({\cal E}_{d_1}, {\cal E}_{d_2})\geq C'_{p}\, |d_1-d_2|^{1/p},\ \fo d_1, d_2\in\Z,
\ee
for some positive constant $C'_p$. See Open Problem \ref{sa3} below for a more general question. See also Section \ref{sa4} for some partial positive answers.

\medskip
We now discuss similar questions when $N\geq 2$. We define $\dist_{\wsp}$ and $H-\dist_{\wsp}$ using one of the usual $\wsp$ (semi-)norms. 

 For $s=1$, $N\geq 2$, $p\geq N$, and for the semi-norm $|f-g|_{W^{1,p}}=\|\na f-\na g\|_{L^p}$, the exact value of the  $W^{1,p}$ distance $\dist_{W^{1,p}}$ between the classes ${\cal E}_{d_1}$ and ${\cal E}_{d_2}$, $d_1\neq d_2$,  has been computed by S. Levi and I. Shafrir \cite{ls}. A striking fact is that this distance does not depend on $d_1$ and $d_2$, but only on $p$ (and $N$). 
 

We start with $\dist_{\wsp}$.  
\bt
\label{vo1}
We have
\ben
\item
If $N\geq 1$ and $1< p<\infty$, then 
\be
\label{vo2}
\dist_{W^{N/p,p}}({\cal E}_{d_1}, {\cal E}_{d_2})=0,\ \fo d_1, d_2\in\Z.
\ee
\item
If [$1<p<\infty$ and $s>N/p$] or [$p=1$ and $s\ge N$], there exist constants $C_{s,p, N},  C'_{s,p, N}>0$ such that
\be
\label{ve5}
C'_{s,p, N}\leq \dist_{W^{s,p}}({\cal E}_{d_1}, {\cal E}_{d_2})\leq
C_{s,p, N},\ \fo d_1, d_2\in\Z\text{ such that }d_1\neq d_2,
\ee
  (here $N\geq 2$ is essential).
\een 
\et
\br
\label{vo6}
We do not know whether, under the  assumptions of Theorem \ref{vo1},
item {\it 2},  it is true that
$\dist_{\wsp}({\cal E}_{d_1}, {\cal E}_{d_2})=C''_{s, p, N}$, $\fo d_1, d_2\in\Z$ such that $d_1\neq d_2$, for some appropriate choice of the $\wsp$ semi-norm.
[Recall that the answer is positive when $s=1$ \cite{ls}.]
\er

\medskip
We now turn to the Hausdorff distance.
\bt
\label{vp1}
Let $N\geq 1$. We have
\ben
\item
For every $1\le p<\infty$ 
\be
\label{ve50}
H-\dist_{W^{N/p,p}}({\cal E}_{d_1}, {\cal E}_{d_2})\leq C_{p, N}|d_1-d_2|^{1/p},\ \fo d_1, d_2\in\Z.
\ee
\item
If $s>0$ and $1\leq p<\infty$ are such that $sp>N$, then
\be
 \label{ve9}
 \Dist_{\wsp}({\cal E}_{d_1}, {\cal E}_{d_2})=\infty,\ \fo d_1, d_2\in\Z\text{ such that }d_1\neq d_2.
 \ee
\een
\et

We conclude with three questions.
\begin{open-problem}
\label{sa2}
Is it true that for every $d_1$, $d_2$, $N$, $s$, $p$
\be
\label{Y1}
\Dist_{\wsp}({\cal E}_{d_1}, {\cal E}_{d_2})=\Dist_{\wsp}({\cal E}_{d_2}, {\cal E}_{d_1})?
\ee
(recall that $\Dist_{\wsp}({\cal E}_{d_1}, {\cal E}_{d_2})$ has been defined in \eqref{sa1}). Or even better:
\be
\label{Y2}
\text{Does }\Dist_{\wsp}({\cal E}_{d_1}, {\cal E}_{d_2})\text{ depend only on }|d_1-d_2|\text{ (and }s, p, N\text{)}?
\ee
\end{open-problem}

There are several cases where we have an explicit formula for 
 $\Dist_{\wsp}({\cal E}_{d_1}, {\cal E}_{d_2})$ and in all such cases
 \eqref{Y2} holds. See e.g. the proofs of Theorem \ref{vm1}, items
 {\it 1} and {\it 3}, and Theorem \ref{vp1}, item {\it 2}. We may also
 ask questions similar to \eqref{Y2} for $\dist_{\wsp}({\cal E}_{d_1},
 {\cal E}_{d_2})$ and for $H-\dist_{\wsp}({\cal E}_{d_1}, {\cal
   E}_{d_2})$ (assuming the answer to \eqref{Y2} is negative); again,
 the answer is positive in many cases. A striking special case still open when $N=1$ is: does $\dist_{W^{2,1}}({\cal E}_{d_1}, {\cal E}_{d_2})$ depend only on $|d_1-d_2|$? 
 
 \begin{open-problem}
 \label{sa3}
 Is it true that for every $N\ge 1$ and every $1\le p<\infty$, there exists some $C'_{p, N}>0$ such that 
 \be
\label{ve500}
\Dist_{W^{N/p,p}}({\cal E}_{d_1}, {\cal E}_{d_2})\geq C'_{p, N}|d_1-d_2|^{1/p},\ \fo d_1, d_2\in\Z?
\ee
 \end{open-problem}
 A weaker version of Open Problem \ref{sa3} is obtained when we
 replace $\Dist_{W^{N/p,p}}$ by $H-\dist{W^{N/p,p}}$ (there
 will be no difference of course in case the answer to Open Problem
 \ref{sa2} is positive):
 \setcounter{op}{1}
 \begin{op}
\label{sa3'}
 With the same assumptions as in Open Problem
 \ref{sa3}, is it true that 
 \be
\label{ve500H}
H-\dist_{W^{N/p,p}}({\cal E}_{d_1}, {\cal E}_{d_2})\geq C'_{p, N}|d_1-d_2|^{1/p},\ \fo d_1, d_2\in\Z?
\ee
 \end{op}
 The only case for which Open Problem \ref{sa3} is  settled is
 $[N=1, p=1]$  (see Theorem \ref{vm1}, item {\it 1}). We emphasize three cases of special interest:
 {\it 1.} $[N=1, p=2]$, {\it 2.} $[N=2, p=2]$ and {\it 3.} $[N=2, p=1]$. In case {\it 1}, the answer to Open Problem \ref{sa3'} is positive (see Corollary \ref{ha8}). See also Section \ref{sa4} where further partial answers are presented.

 Here is another natural open problem. Recall that for any $f\in W^{N/p,p}(\sn ; \sn)$ and any sequence $(f_n)\subset W^{N/p, p}(\sn ; \sn)$ such that $|f_n-f|_{W^{N/p, p}}\to 0$, we have $\deg f_n\to \deg f$. We also know (Theorem \ref{vo1}, item {\it 1}) that there exist sequences $(f_n)$, $(g_n)$ in $W^{N/p, p}(\sn ; \sn)$ such that $|f_n-g_n|_{W^{N/p,p}}\to 0$ but $|\deg f_n-\deg g_n|=1$, $\fo n$.

 \begin{open-problem}
\lab{tD2}
Is it true that $|\deg f_n-\deg g_n|\to 0$ for any sequences $(f_n)$, $(g_n)$ in $W^{N/p, p}(\sn ; \sn)$ such that 
\bes
|f_n-g_n|_{W^{N/p,p}}\to 0\text{ as }n\to\infty
\ees
and 
\bes
|f_n|_{W^{N/p,p}}+|g_n|_{W^{N/p, p}}\text{ remains bounded as }n\to\infty?
\ees
\end{open-problem}



Our paper is organized as follows. In Section~\ref{sec:ua2} we recall
some known properties of $\wsp (\sn ;
\sn)$. Sections~\ref{sec:ua5}--\ref{ua7} concern only the case $N=1$, 
while Sections~\ref{ve1}--\ref{sa4} deal with $N\geq1$. The proofs of Theorems
\ref{bgl20} and \ref{ub90} are presented in Sections \ref{sec:ua5} and
\ref{ub6}. \rth{uc3}, item {\it 1} and \rth{vm1}, items {\it 2--3},
are special cases of, respectively, \rth{vo1}, item {\it 1} and
\rth{vp1},  items {\it 1--2}; their proofs are presented in
Section~\ref{ve1}. \rth{uc3}, item {\it 2} is established in
 Section~\ref{ua7}. The proof of \rth{vm1}, item {\it 1} appears in
 Section~\ref{sec:ua5}. Theorems \ref{vo1} and \ref{vp1} belong to
 Section~\ref{ve1}. Partial solutions to the open problems are given
 in Section~\ref{sa4}. A final Appendix gathers various auxiliary results.

\subsubsection*{Acknowledgments} The
first author (HB) was partially supported by NSF grant DMS-1207793.  The second author (PM) was partially  supported  by the LABEX MILYON (ANR-10-LABX-0070) of Universit\'e de Lyon,
within the program \enquote{Investissements d'Avenir} (ANR-11-IDEX-0007) operated
by the French National Research Agency (ANR). The third author (IS)
was supported by  the Israel Science Foundation (Grant No. 999/13).

\tableofcontents

\section{Some standard properties of maps $f:\sn\to\sn$}
\label{sec:ua2}

In this section, we alway assume that \eqref{ua1} holds. 
\bl
\label{ih1}
$C^\infty(\sn ; \sn)$ is dense in $\wsp (\sn ; \sn)$.
\el

When $s=1$, $p=2$, $N=2$, the above was proved in \cite{su}. The argument there extends to the general case.


When
\be
\label{ff1}
[0\le s-N/p<1]\text{ or }[s- N/p=1\text{ and }p>1],
\ee
we can complement Lemma \ref{ih1} as follows.
\bl
\label{ih2}
Assume that \eqref{ff1} holds. Then every map $f\in W^{s, p}(\sn ; \sn)$ can be approximated by a sequence $(f_n)\subset C^\infty(\sn ; \sn)$ such that every $f_n$ is constant near some point.
\el

We note that condition \eqref{ff1} is equivalent to \eqref{ua1} + the non embedding $\wsp\not\hookrightarrow C^1$. The non embedding is also necessary for the validity of the conclusion of Lemma \ref{ih2}. Indeed, a $C^1$ function $f$, say on the real line, whose derivative does not vanish, cannot be approximated in $C^1$ by a sequence $(f_n)$ such that each $f_n$ is constant near some point.

 The proof of Lemma \ref{ih2} is postponed to the Appendix. 

%

\begin{theo}[\cite{brezisnirenberg1}]
\label{ih3} 
{\it For $1\le p<\infty$, the degree of smooth maps $f:\sn\to\sn$ is continuous with respect to the $W^{N/p, p}$ convergence.

As a consequence, under assumption \eqref{ua1} the degree extends to maps in $W^{N/p,p} (\sn ; \sn)$. Moreover,  if $(f_n)$ and $f$ are in $W^{N/p, p}$ and $|f_n-f|_{W^{N/p, p}}\to 0$, then $\deg f_n\to \deg f$.}
\end{theo}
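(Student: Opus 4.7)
\medskip
\noindent\textbf{Proof plan.}
The plan is to reduce the statement to the Brezis--Nirenberg VMO degree theory via the critical Sobolev embedding into BMO. The cornerstone is the continuous embedding
\bes
W^{N/p,p}(\sn;\R^{N+1})\hookrightarrow \mathrm{VMO}(\sn;\R^{N+1}),
\ees
together with the quantitative bound
\bes
\|h\|_{\mathrm{BMO}}\leq C_{p,N}\, |h|_{W^{N/p,p}},\quad \forall\, h\in W^{N/p,p}(\sn;\R^{N+1}),
\ees
a classical consequence of the fractional Poincar\'e inequality on balls applied to the Gagliardo seminorm (and similarly for $s=N$, $p=1$, integer $s$, etc.). In particular, $W^{N/p,p}$-seminorm convergence implies BMO convergence, and smooth maps (hence continuous, hence automatically VMO) are $W^{N/p,p}$-dense in $\wsp(\sn;\sn)$ by Lemma~\ref{ih1}.

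\medskip
\noindent
The next step is to invoke the VMO degree theory of Brezis--Nirenberg \cite{brezisnirenberg1}: there is an integer-valued degree $\deg:\mathrm{VMO}(\sn;\sn)\to\Z$ which extends the classical degree on $C^0(\sn;\sn)$ and which is BMO-stable, in the sense that
\bes
f_n, f\in \mathrm{VMO}(\sn;\sn),\ \|f_n-f\|_{\mathrm{BMO}}\to 0\ \Longrightarrow\ \deg f_n=\deg f\text{ for }n\text{ large.}
\ees
Combining this with the embedding above immediately yields the first assertion of the theorem: if $(f_n),f\in C^\infty(\sn;\sn)$ and $|f_n-f|_{W^{N/p,p}}\to 0$, then $\|f_n-f\|_{\mathrm{BMO}}\to 0$, and so the classical degrees $\deg f_n$ coincide with $\deg f$ for $n$ large.

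\medskip
\noindent
For the extension to $W^{N/p,p}(\sn;\sn)$, given $f$ in this space one approximates it by a smooth sequence $(f_n)$ as in Lemma~\ref{ih1}. The sequence $(f_n)$ is Cauchy in $W^{N/p,p}$, hence Cauchy in BMO; since $C^0\subset\mathrm{VMO}$ and VMO is BMO-closed, the limit $f$ belongs to $\mathrm{VMO}(\sn;\sn)$. By BMO-stability the integer sequence $(\deg f_n)$ is eventually constant, and we define $\deg f$ as that common value. Independence of the approximating sequence is immediate: given two smooth sequences converging to $f$ in $W^{N/p,p}$, their interleaving still converges, so their eventual degrees coincide. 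This value of course agrees with the VMO degree of $f$ inherited from \cite{brezisnirenberg1}. The last assertion is then an instance of the BMO-stability applied directly to arbitrary $W^{N/p,p}$-Cauchy data: $|f_n-f|_{W^{N/p,p}}\to 0$ implies $\|f_n-f\|_{\mathrm{BMO}}\to 0$, hence $\deg f_n=\deg f$ eventually.

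\medskip
\noindent
There is no genuine obstacle beyond invoking the correct black boxes. The only point that requires care is the verification of the critical embedding $W^{N/p,p}\hookrightarrow \mathrm{BMO}$ with control of the \emph{seminorm} (not just the norm), so that the degree, which is insensitive to additive constants, is actually controlled by $|h|_{W^{N/p,p}}$; this is standard and follows from localization of the Gagliardo seminorm on geodesic balls of $\sn$ combined with the fractional Poincar\'e inequality.
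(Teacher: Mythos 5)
Your proposal is correct and follows exactly the route the paper takes: the paper disposes of this theorem in one line, stating that it "follows from the corresponding assertion for the BMO convergence \cite{brezisnirenberg1} and the fact that $W^{N/p,p}\hookrightarrow$ BMO," which is precisely your reduction via the critical embedding (with seminorm control) to the Brezis--Nirenberg VMO degree and its BMO-stability. You have merely spelled out the details (density of smooth maps, well-posedness of the extension) that the paper leaves implicit.
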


This follows from the corresponding assertion for the BMO convergence \cite{brezisnirenberg1} and the fact that $W^{N/p, p}\hookrightarrow\, $BMO.

%

When $N=1$, an alternative equivalent definition of the degree can be obtained via lifting \cite{bmp, bm}. In this case, given $f\in \wsp(\so ; \so)$, it is always possible to write 
\be
\label{ii1}
f(e^{\im\theta})=e^{\im\varphi(\theta)},\ \fo \theta\in\R,\text{ for some }\varphi\in \wsp_{loc}(\R ; \R)
\ee
(no condition on $s$ and $p$ \cite{lss}). 

If, in addition, \eqref{ua1} holds, then the function $\varphi(\cdot+2\pi)-\varphi(\cdot)$ is constant a.e. \cite{lss}, and we have 
\be
\label{ii3}
\deg f=\frac 1{2\pi} (\varphi(\cdot+2\pi)-\varphi(\cdot)).
\ee

If instead of \eqref{ua1} we assume that either [$sp>1$] or [$s=1$ and $p=1$], then $\varphi$ is continuous and \eqref{ii3} becomes
\be
\label{ii4}
\deg f=\frac 1{2\pi} (\varphi(2\pi)-\varphi(0))=\frac 1{2\pi} (\varphi(\pi)-\varphi(-\pi)).
\ee

Finally, we mention the formula
\be
\label{ii5}
\deg f=\frac 1{2\pi}\int_{\so}f\wedge\dot f,\ \fo f\in W^{1,1}(\so ; \so).
\ee
\section{$W^{1,1}$ maps}
\label{sec:ua5}

\begin{proof}[Proof of Theorem \ref{bgl20} for $p=1$, and  Theorem \ref{ub90}, item 1]
${}$\\
{\it Step 1.} Proof of \enquote{$\leq$} in \eqref{1a1}\\
With no loss of generality we may assume that  $d_1>d_2$ and $d_1>0$. Set $d:=d_1-d_2$ and $L:=d+1$. We define $f (e^{\im\theta}):=e^{\im\va(\theta)}\in{\cal E}_{d_1}$, $g (e^{\im\theta}):=e^{\im\psi(\theta)}\in{\cal E}_{d_2}$,  where $\va, \psi\in W^{1,1}((0,2\pi))$ are defined as follows:
   \begin{equation*}
  \va(\theta):=\begin{cases}
L\theta, &\text{if } \theta\in[0,2d\,\pi /L)\\
 Ld_2\theta+2(d_1-Ld_2 )\,\pi ,&\text{if }\theta\in[2d\, \pi /L,2\,\pi)
\end{cases},
   \end{equation*}
and 
\bes
\psi(\theta):=
\begin{cases}
L\, \dist (\theta, 2\,\pi\,\Z/L),&\text{if }\theta\in[0,2d\, \pi /L)\\
\varphi(\theta)-2d\,\pi,&\text{if }\theta\in[2d\, \pi /L,2\,\pi)
\end{cases}
   \end{equation*}
 (and thus on $[0,2d\, \pi/L]$ the graph of $\psi$ is a zigzag consisting of $d$ triangles). 
   

%
   
   For $k\in\Z$, $0\le k\le d-1$, set
   \bes
   I_k=\left[\frac{2k\,\pi}L,\, \frac{(2k+1)\, \pi}L  \right]\ \text{and}\ J_k=\left[\frac{(2k+1)\,\pi}L,\, \frac{(2k+2)\, \pi}L  \right].
   \ees
   
   Note that
   \bes
   \psi(\theta)=\begin{cases}
   L\theta-2k\, \pi,&\text{if }\theta\in I_k\\
  2(k+1)\, \pi- L\theta,&\text{if }\theta\in J_k
   \end{cases},
   \ees
   so that $g=f$ on $I_k$ and $g=\overline f$ on $J_k$. Hence
  \bes
  \left|\dot f-\dot g\right|=\begin{cases}
  0,&\text{on }I_k\\
  -2\, (\sin\va)\, \va',&\text{on }J_k
  \end{cases}.
  \ees
  
  Therefore
  \bes
  \int_{\so}\left|\dot f-\dot g\right|=2\sum_{k=0}^{d-1}\int_{J_k} (\cos \va)'(\theta)\, d\theta=4\, d= 4\, (d_1-d_2).
  \ees
  
\smallskip
\noindent
{\it Step 2.} Proof of \enquote{$\geq$} in \eqref{1a1}\\
We may assume that $d:=d_1-d_2>0$. We prove that  when $f\in{\cal E}_{d_1}$ and $g\in{\cal E}_{d_2}$ we have $\int_{\so}|\dot f-\dot g|\geq 4 d$. The map $f/g$ is onto (since its degree is $d\neq 0$), and thus with no loss of generality we may assume that $f(1)=g(1)$. Write $f(e^{\im\theta})=e^{\im\va(\theta)}\, g(e^{\im\theta})$, with $\va\in W^{1,1}((0,2\pi))$. We have $\va(2\pi)-\va(0)=2d\, \pi$, and we may assume that $\va(0)=0$. Consider $0=t_0<\tau_0<t_1<\cdots<\tau_{d-1}<t_d= 2\pi$ such that $\va(t_j)=2\pi j$, $j=0,\ldots, d$,  and $\va(\tau_j)=2\pi j+\pi$, $j=0,\ldots, d-1$.  Thus the function $w:=|f-g|$ satisfies $w(e^{\im t_j})=0$ and $w(e^{\im \tau_j})=2$. Therefore, we have $\int_{\so}|\dot w|\geq 4d$. In order to conclude, it suffices to note the inequality $|\dot w|\leq |\dot f-\dot g|$ a.e. 
\end{proof}

We now turn to the properties of the Hausdorff distance in $W^{1,1}$.
\begin{proof}[Proof of Theorem \ref{vm1}, item 1]

\smallskip
\noindent
{\it Step 1.} Proof of \enquote{$\le$} in \eqref{XX}   \\
By symmetry, it suffices to prove that for every  $f\in\mathcal{E}_{d_1}$ and every $\ve>0$ there exists some 
$g\in\mathcal{E}_{d_2}$ satisfying
\begin{equation}
  \label{eq:43}
\int_{{\mathbb S}^1} |\dot{f}-\dot{g}|\leq 2\pi|d_1-d_2|+\ve.
\end{equation}

 By density of $C^\infty({\mathbb S}^1;{\mathbb S}^1)$ in  $W^{1,1}({\mathbb S}^1;{\mathbb S}^1)$ it suffices
 to prove \eqref{eq:43} for smooth $f$. Moreover,  we may assume that $f$ is constant near some point, say $1$ (see Lemma \ref{ih2}). We may thus write 
$
f(e^{\im \theta})=e^{\im \va(\theta)}$, $\theta\in [0,2\pi]$,
for some smooth $\va$ satisfying $\va(2\pi)-\va(0)=2 \pi\, d_1$ and constant near $0$. For a small $\lambda>0$ define $\psi=\psi^{(\lambda)}$ on $[0,2\pi]$ by 
\begin{equation}
\label{eq:47}
  \psi(\theta):=\begin{cases}
\d\va(\theta)-\frac{2d\, \pi}{\lambda}\,\theta,&\text{if }
\theta\in[0,\lambda]\\
 \va(\theta)-2d\, \pi, &\text{if } \theta\in(\lambda,2\pi]
\end{cases}
\end{equation}
 (where $d:=d_1-d_2$), and then set
 $g(e^{\im \theta}):=e^{\im \psi(\theta)}\in\mathcal{E}_{d_2}$. Clearly,
 \begin{equation*}
   \int_{{\mathbb S}^1} |\dot{f}-\dot{g}|=\int_0^\lambda
   \left|(e^{\im \psi}-e^{\im \va})'\right|=2|d|\, \pi=2\pi |d_1-d_2|.
 \end{equation*}

\smallskip
\noindent
{\it Step 2.} Proof of 
\be
\label{Z1}
\Dist_{W^{1,1}}({\cal E}_{d_1}, {\cal E}_{d_2})\ge 2\pi\, |d_1-d_2|,\ \fo d_1, d_2\text{ with }0\le d_1<d_2.
\ee
Let $f(z):=z^{d_1}\in {\cal E}_{d_1}$. It suffices to prove that 
\bes
|f-g|_{W^{1,1}}\ge 2\pi\, (d_2-d_1),\ \fo g\in {\cal E}_{d_2}.
\ees

By the triangle inequality, for any such $g$, we have
\be
\label{uh1}
  \int_{{\mathbb S}^1}|\dot{f}-\dot{g}|\geq
  \int_{{\mathbb S}^1}[|\dot{g}|-|\dot{f}|]\geq \left|\int_{{\mathbb S}^1}g\land\dot{g}\right|-2\pi  d_1= 2\pi\, (|d_2|-d_1)=2\pi\, (d_2-d_1),
\ee
since $|\dot f|=d_1$ on $\so$.

\smallskip
\noindent
{\it Step 3.} Proof of
\be
\label{Z3}
\Dist_{W^{1,1}}({\cal E}_{d_1}, {\cal E}_{d_2})\ge 2\pi\, |d_1-d_2|,\ \fo d_1\ge 0,\ \fo d_2\in\Z\text{ with }d_2<d_1.
\ee
The case $d_1=0$ is trivial since we may take as above $f(z):=z^0=1$ and apply \eqref{uh1}. 

We now turn to the case $d_1>0$ and $d_2<d_1$ which is quite involved. Inequality \eqref{Z3} is a direct consequence of the following 
\bl
\label{sa6}
Assume that $d_1>0$ and $d_2<d_1$. Then for each $\delta>0$ there exists $f\in {\cal E}_{d_1}$ such that 
\be
\label{sa7}
\int_{\so}|\dot f-\dot g|\ge (2\pi-\delta)\, (d_1-d_2),\ \fo g\in {\cal E}_{d_2}.
\ee
\el

\begin{proof}
For large $n$ (to be chosen later) let $f_n(e^{\im \theta})=e^{\im
  \varphi_n(\theta)}\in\mathcal{E}_{d_1}$, with $\varphi_n\in W^{1,1}((0,2\pi))$
defined by setting $\varphi_n(0)=0$ and 
\begin{equation}
\label{eq:phi-prime}
  \varphi_n'(\theta)= \begin{cases} d_1n,&
  \theta\in[2j\, \pi/n^2), (2j+1)\, \pi/n^2]\\
             -d_1(n-2),&
             \theta\in      ((2j+1)\, \pi/n^2), (2j+2)\, \pi/n^2]
  \end{cases},\ j=0,1,\ldots,n^2-1. 
\end{equation}

Therefore, the graph of $\varphi_n$ is a zigzag 
of $n^2$ triangles. Note that the average gradient of $\varphi_n$ is
$d_1$, since
\begin{equation}
\label{eq:small}
  \int_{2j\, \pi/n^2}^{(2j+2)\,\pi/n^2}\varphi_n'=2\pi \frac{d_1}{n^2},\ j=0,1,\ldots,n^2-1.
\end{equation}
Hence $\int_0^{2\pi}\varphi_n'=2\pi\, d_1$ (so indeed $f_n\in
\mathcal{E}_{d_1}$). On the other hand, note that
\begin{equation*}
  \int_{2j\, \pi/n^2}^{(2j+2)\, \pi/n^2)}|\varphi_n'|=2(n-1)\, \pi 
  \frac{d_1}{n^2},\ j=0,1,\ldots, n^2-1\;\Longrightarrow\;\int_0^{2\pi}
  |\varphi_n'|=2(n-1)\, \pi\, d_1 ,
\end{equation*}
i.e., $\lim_{n\to\infty} \|\dot f_n\|_{L^1(\so)}=\infty$.

Consider now any $g\in\mathcal{E}_{d_2}$ and write $g(e^{\im
  \theta})=e^{\im\psi(\theta)}$ with 
$\psi\in W^{1,1}((0,2\pi))$ satisfying $\psi(2\pi)-\psi(0)=2\pi\,
d_2$. For convenience we extend both $\varphi_n$ and $\psi$ to all of $\R$ in such a way that
the extensions are continuous functions whose derivatives are $2\pi$-periodic. 
Set $h=f_n\, \overline g\in\mathcal{E}_d$ with $d:=d_1-d_2>0$. Hence, $h(e^{\im
  \theta})=e^{\im\eta(\theta)}$ with $\eta:=\varphi_n-\psi$. We can find $d$
(closed) arcs on $\so$, $I_1,\ldots,I_d$, with disjoint interiors such that:
\begin{equation*}
  I_j=\{e^{\im \theta};\,\theta\in[s_j,t_j]\},\ h(e^{\im s_j})=h(e^{\im
    t_j})=1\text{ and }\int_{s_j}^{t_j}\eta'=2\pi,\text{ for }j=1,\ldots,d.
\end{equation*}
For small $\varepsilon>0$ define,  for each $j=1,\ldots,d$:
\begin{equation}
\label{eq:alpha-beta}
\begin{array}{ll}
  \alpha^{-}_j=\max\left\{\theta\in[s_j,t_j];\, h(e^{\im\theta})=e^{\im\varepsilon}\right\}, 
&\beta^{-}_j=\min\left\{\theta\in[\alpha^{-}_j,t_j]\,;\,h(e^{\im\theta})=e^{\im(\pi-\varepsilon)}\right\},\\
  \alpha^{+}_j=\max\left\{\theta\in[\beta^{-}_j,t_j];\,h(e^{\im\theta})=e^{\im(\pi+\varepsilon)}\right\},\ 
& \beta^{+}_j=\min\left\{\theta\in[\alpha^{+}_j,t_j];\,h(e^{\im\theta})=e^{\im (2\pi-\varepsilon)}\right\}.
\end{array}
\end{equation}
Then, set
$ I_j^{\pm}:=\{e^{\im
    \theta};\,\theta\in[\alpha_j^{\pm},\beta_j^{\pm}]\}
$.
Using the equality
\bes
f_n-g=e^{\im\varphi_n}-e^{\im\psi}=2\im\, \sin\left(\frac{\varphi_n-\psi}{2}\right)\, e^{\im\, (\varphi_n+\psi)/2},
\ees
we obtain
\begin{equation}
  \label{eq:gradf-g}
|\dot{f}_n-\dot{g}|^2=\cos^2\left(\frac{\varphi_n-\psi}{2}\right)\, (\varphi_n'-\psi')^2+\sin^2\left(\frac{\varphi_n-\psi}{2}\right)\,  (\varphi_n'+\psi')^2.
\end{equation}
Note that by the definition of $I_j^\pm$ we have
\begin{equation}
\label{sa9}
  z=e^{\im\theta}\in
  I_j^\pm\;\Longrightarrow\;\left|\sin\left(\frac{\varphi_n(\theta)-\psi(\theta)}{2}\right)\right|,\, \left|\cos\left(\frac{\varphi_n(\theta)-\psi(\theta)}{2}\right)\right|\geq \sin(\varepsilon/2).
\end{equation}
Combining \eqref{sa9} with \eqref{eq:gradf-g} and \eqref{eq:phi-prime} gives
\begin{equation}
\label{eq:integral}
\begin{aligned}
  \int_{ I_j^\pm}|\dot{f}_n-\dot{g}|&\geq \sin(\varepsilon/2) \int_{\alpha_j^{\pm}}^{\beta_j^{\pm}}
 \sqrt{(\varphi_n'-\psi')^2+(\varphi_n'+\psi')^2}\\
 &\geq
\sqrt{2} \sin(\varepsilon/2) 
\int_{\alpha_j^{\pm}}^{\beta_j^{\pm}}|\varphi_n'|\geq \sqrt{2} \sin(\varepsilon/2)\, d_1(n-2)\, | I_j^\pm|,
\end{aligned}
\end{equation}
where $| I_j^\pm|:=\beta_j^{\pm}-\alpha_j^{\pm}$.
If for one of the arcs $I_j^\pm$ there holds
\begin{equation*}
  \sqrt{2} \sin(\varepsilon/2)\, d_1(n-2)\, | I_j^\pm|>2\pi d,
\end{equation*}
then we clearly have $\int_{\so}|\dot{f}-\dot{g}|>2\pi d$ by
\eqref{eq:integral}, and \eqref{sa7} follows. Therefore, we are
left with the case where
\begin{equation}
  \label{eq:case}
  |I_j^{-}|,|I_j^{+}|\leq\frac{c_0}{n},\ j=1,\ldots,d,
\end{equation}
 where $c_0=c_0(d_1,d_2,\varepsilon)$.

While in the previous case the lower bound followed from the fact that
$|\varphi_n'|$ is large (i.e., of the order of $n$), the argument under
assumption \eqref{eq:case} uses another property of
$\varphi_n$. Namely, thanks to \eqref{eq:small}, the change of
$\varphi_n$ on an interval of length $O(1/n)$ (like is the case for
$I_j^{\pm}$) is only of the order $O(1/n)$. It follows that $f_n$ is
``almost'' a constant on the corresponding arc and an important contribution to
the BV norm of $f_n-g$ comes from the change of the phase $\psi$ on
the corresponding interval. The latter equals approximately $\pi-2\varepsilon$, and
summing the contributions from all the arcs yields the desired lower
bound. 
The details are given below. 

 In the sequel we will denote by $c$
 different constants depending on $d_1,d_2$ and $\varepsilon$ alone.
A direct consequence of \eqref{eq:small} that will play a key role in
the sequel is the following:
\begin{equation}
  \label{eq:moderate}
 \left|\int_J \varphi_n'\right|\leq \frac{c}{n},\ \text{for every interval }J\subset\R\text{ with }|J|\le \frac{c_0}n.
  \end{equation}
  
 This implies that 
\begin{equation*}
   |f_n(z_1)-f_n(z_2)|\leq\frac{c}{n},\ \fo z_1,z_2\in I_j^{\pm},\ j=1,\ldots,d.
\end{equation*}

Therefore, for each $I_j^{\pm}$ there exists $\nu_j^{\pm}\in\so$ such
that
\begin{equation}
  \label{eq:def-nu}
   |f_n(z)-\nu_j^{\pm}|\leq \frac{c}{n},\ \fo z\in I_j^{\pm},\ j=1,\ldots,d.
\end{equation}

 By \eqref{eq:def-nu} we
have
\begin{equation}
\label{eq:norm1}
  \left|1-|g(z)-(f_n(z)-\nu_j^{\pm})|\right|\leq\frac{c}{n},\ \fo z\in I_j^{\pm},\ j=1,\ldots,d.\end{equation}

Fix an arc $I_j^\pm$. By \eqref{eq:norm1},  we can define on $[\alpha_j^{\pm},\beta_j^{\pm}]$ a
 $W^{1,1}$-function $\psi_n=\psi_{n, j, \pm}$, determined uniquely up to addition of an
 integer multiple of $2\pi$, by
 \begin{equation}
   \label{eq:psin}
g(e^{\im\theta})-(f_n(e^{\im\theta})-\nu_j^{\pm})=|g(e^{\im\theta})-(f_n(e^{\im\theta})-\nu_j^{\pm})|\, e^{\im\psi_n(\theta)}.
 \end{equation}
 
 From \eqref{eq:def-nu}--\eqref{eq:psin} we have
 \begin{equation}
   \label{eq:g-psin}
|e^{\im\psi(\theta)}- e^{\im\psi_n(\theta)}|\leq \frac{c}{n},\ \fo \theta\in [\alpha_j^{\pm},\beta_j^{\pm}],
 \end{equation}
 and 
 \be
 \label{sa10}
 |\dot g (e^{\im \theta})-\dot f_n (e^{\im\theta})|\ge |g(e^{\im\theta})-(f_n(e^{\im\theta})-\nu_j^{\pm})|\, |\psi_n'(\theta)|\ge \left( 1-\frac cn \right)\, |\psi_n'(\theta)|.
 \ee
 
 By \eqref{sa10}, we have
 \begin{equation}
\label{eq:gf-n}
    \int_{ I_j^\pm}|\dot g-\dot f_n|\geq
    \left(1-\frac{c}{n}\right)\int_{\alpha_j^{\pm}}^{\beta_j^{\pm}}|\psi_n'|\geq  \left(1-\frac{c}{n}\right)|\psi_n(\beta_j^{\pm})-\psi_n(\alpha_j^{\pm})|.
 \end{equation}
By \eqref{eq:g-psin}, \eqref{eq:gf-n},\eqref{eq:moderate} and
\eqref{eq:alpha-beta}, we obtain
\begin{equation}
  \label{eq:lb}
   \int_{ I_j^\pm}|\dot g-\dot f_n|\geq
   \left(1-\frac{c}{n}\right)|\psi(\beta_j^{\pm})-\psi(\alpha_j^{\pm})|-\frac{c}{n}\geq
   \big(1-\frac{c}{n}\big)|\eta(\beta_j^{\pm})-\eta(\alpha_j^{\pm})|-\frac{c}{n}\geq \left(1-\frac{c}{n}\right)(\pi-2\varepsilon).
\end{equation}
Summing \eqref{eq:lb} over  the $2d$ arcs 
$I_j^{-}$, $I_j^{+}$, $j=1,\ldots, d$ yields
\begin{equation}
\label{sa11}
  \int_{ I_j^\pm}|\dot g-\dot f_n|\geq  \left(1-\frac{c}{n}\right)\, (2\pi\, d-4\varepsilon\, d).
\end{equation}

Finally we choose $\ve=\delta/8$ and $n\ge \d\frac {4\pi}\delta\, c(d_1, d_2, \ve)$ and deduce from \eqref{sa11} that 
\eqref{sa7} holds.
\end{proof}
\smallskip
\noindent
{\it Step 4.} Proof of \eqref{XX} completed\\
Combining Steps 1, 2 and 3 we find that
\bes
\Dist_{W^{1,1}}({\cal E}_{d_1}, {\cal E}_{d_2})=2\pi\, |d_1-d_2|,\ \fo d_1\ge 0,\  \fo d_2\in\Z,
\ees
which yields directly
\bes
\Dist_{W^{1,1}}({\cal E}_{d_1}, {\cal E}_{d_2})=2\pi\, |d_1-d_2|,\ \fo d_1\in\Z,\ \fo d_2\in\Z. \qedhere
\ees

\end{proof}
We close this section with some results concerning the attainability of
$\Dist_{W^{1,1}}({\cal E}_{d_1}, {\cal E}_{d_2})$. For any $d_1\neq
d_2$ we may ask (question 1) whether there exists $f\in{\cal E}_{d_1}$ such that 
\begin{equation}
\label{eq:att}
  d_{W^{1,1}}(f,\mathcal{E}_{d_2}):=\inf_{g\in\mathcal{E}_{d_2}}|f-g|_{W^{1,1}}=\Dist_{W^{1,1}}({\cal E}_{d_1}, {\cal E}_{d_2})\,,
\end{equation}
and in case the answer to question 1 is positive for some
$f\in\mathcal{E}_{d_1}$, we may ask (question 2) whether 
 the infimum $\inf_{g\in\mathcal{E}_{d_2}}|f-g|_{W^{1,1}} $ is actually a
minimum, i.e., for some $g\in\mathcal{E}_{d_2}$,
\begin{equation}
\label{eq:strong-att}
  |f-g|_{W^{1,1}}=d_{W^{1,1}}(f,\mathcal{E}_{d_2})=\Dist_{W^{1,1}}({\cal E}_{d_1}, {\cal E}_{d_2})\,\,.
\end{equation}
 There is a trivial case where the answer to both questions is
affirmative, namely, when $0=d_1\neq d_2$. Indeed, for $f=1$ and
$g(z)=z^{d_2}$ we clearly have,
\begin{equation*}
  |f-g|_{W^{1,1}}=\int_{S^1}|\dot g|=2\pi|d_2|=\Dist_{W^{1,1}}({\cal E}_{0}, {\cal E}_{d_2})\,.
\end{equation*}
The next proposition provides answers to these attainability
questions, demonstrating different behaviors according to the sign of $d_1(d_2-d_1)$.
\begin{proposition}
  \label{prop:attain}
 We have
\ben
\item
If $d_1(d_2-d_1)>0$ then $f\in \mathcal{E}_{d_1}$ satisfies
\eqref{eq:att} if and only if
\begin{equation}
  \label{eq:wedge}
d_1(f\land \dot{f})\geq 0 ~\text{ a.e. in }\so\,.
\end{equation}
Among all maps satisfying \eqref{eq:att}, some  satisfy
\eqref{eq:strong-att} and others do not. 
\item
 If $d_1(d_2-d_1)<0$ then for every $f\in\mathcal{E}_{d_1}$ we have
 $d_{W^{1,1}}(f,\mathcal{E}_{d_2})<\Dist_{W^{1,1}}({\cal E}_{d_1},
 {\cal E}_{d_2})$, so \eqref{eq:att} is never satisfied.
\een
\end{proposition}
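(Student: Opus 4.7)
My plan is to combine a clean lower bound (for the ``if'' direction of item 1) with a sharpened version of the Step 1 construction of \rth{vm1}, localized at a carefully chosen Lebesgue point, to strictly beat the upper bound $2\pi|d_1-d_2|$ in the remaining cases. Throughout, write $f = e^{\im\varphi}$ and set $d := d_1-d_2$.

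For the ``if'' direction of item 1, the hypothesis $d_1(f\wedge\dot f)\ge 0$ a.e.\ means $\dot\varphi$ has constant sign (that of $d_1$), so $\int_{\so}|\dot f| = \left|\int_{\so}\dot\varphi\right| = 2\pi|d_1|$. The bound \eqref{uh1} (from Step 2 of the proof of \rth{vm1}) then yields $|f-g|_{W^{1,1}}\ge 2\pi|d_2|-2\pi|d_1|$ for every $g\in{\cal E}_{d_2}$; under $d_1(d_2-d_1)>0$ the two degrees share the same sign and $|d_2|>|d_1|$, so the right-hand side equals $2\pi|d_1-d_2|$, matching the upper bound $\Dist_{W^{1,1}}({\cal E}_{d_1},{\cal E}_{d_2}) = 2\pi|d_1-d_2|$ from \eqref{XX} and forcing \eqref{eq:att}.

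For the ``only if'' direction of item 1 and for all of item 2 I would prove in one stroke that $d_{W^{1,1}}(f,{\cal E}_{d_2}) < 2\pi|d_1-d_2|$ by applying the Step 1 recipe centered at a Lebesgue point $\theta_0$ of $\dot\varphi$ whose value $A := \dot\varphi(\theta_0)$ has the same sign as $d$. Such a $\theta_0$ exists in both settings: if \eqref{eq:wedge} fails then $\{\dot\varphi\cdot d_1<0\}$ has positive measure, and under item 1's hypothesis $d$ is opposite in sign to $d_1$; in item 2, $d$ has the same sign as $d_1$, and $\int\dot\varphi = 2\pi d_1\neq 0$ forces $\dot\varphi$ to take values of sign $d_1$ on a positive-measure set. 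With $B := 2d\pi/\lambda$, $\alpha(\theta) := B(\theta-\theta_0)$ and $\psi(\theta) := \varphi(\theta)-\alpha(\theta)$ on $[\theta_0,\theta_0+\lambda]$ (with the usual periodic adjustment off the arc), a short calculation gives $|\dot f-\dot g|(\theta) = |(\dot\varphi-B)-\dot\varphi\,e^{\im\alpha}|$. Replacing $\dot\varphi$ by the constant $A$ costs at most $2\int_{\theta_0}^{\theta_0+\lambda}|\dot\varphi-A|=o(\lambda)$ by the Lebesgue-point property, and then the expansion $\int_0^{2\pi}|u-Ae^{\im\alpha}|\,d\alpha = 2\pi|u|+\pi A^2/(2|u|)+O(A^4/|u|^3)$ for $|u|=|A-B|\to\infty$ yields after integration
\[
|f-g|_{W^{1,1}} = 2\pi|d|-\lambda\,A\,\sgn(d)+O(\lambda^2)+o(\lambda).
\]
Since $A\,\sgn(d) = |A|>0$, this is strictly less than $2\pi|d|$ for all sufficiently small $\lambda>0$.

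For the attainability dichotomy in item 1 I would exhibit one example of each type. Take $f\in{\cal E}_{d_1}$ with $\varphi\equiv 0$ on an arc $[0,\lambda_0]$ and $\varphi$ monotone on $[\lambda_0,2\pi]$ with $\varphi(2\pi) = 2\pi d_1$: both \eqref{eq:wedge} and $f$'s being constant on the arc hold, and the Step 1 construction with $\lambda = \lambda_0$ gives $g\in{\cal E}_{d_2}$ with $|f-g|_{W^{1,1}} = \int_0^{\lambda_0}|\dot g| = 2\pi|d|$ exactly, realizing \eqref{eq:strong-att}. For a non-attaining example take $f(z) = z^{d_1}$: equality in the chain $|f-g|_{W^{1,1}}\ge\int|\dot g|-\int|\dot f|\ge 2\pi|d_1-d_2|$ would force both $g\wedge\dot g$ to have constant sign and $\dot f(\theta)$ to be a nonnegative real multiple of $\dot g(\theta)$ pointwise a.e., and with $\dot f = \im d_1 e^{\im d_1\theta}$ these pin $\dot g$ to be proportional to $\im e^{\im d_1\theta}$, forcing $g(e^{\im\theta}) = e^{\im(d_1\theta+c)}$ and hence $\deg g = d_1\neq d_2$, a contradiction. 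The main obstacle is the Taylor analysis in the localized construction: the leading term $2\pi|d|$ itself comes from $|B|\sim 1/\lambda\to\infty$, and the linear correction $-\lambda\,A\,\sgn(d)$ must be extracted and shown to dominate both the $O(\lambda^2)$ quadratic remainder and the $o(\lambda)$ Lebesgue-point error for small enough $\lambda$, which is why the positivity $|A|>0$ at the chosen Lebesgue point is essential.
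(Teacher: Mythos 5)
Your proposal is correct and follows essentially the same route as the paper: the lower bound for the ``if'' direction is exactly the computation in \eqref{uh1} combined with \eqref{XX}, the strict improvement at a Lebesgue point where $\dot\varphi$ has the sign of $d_1-d_2$ is precisely the paper's Lemma \ref{lem:strict} (your expansion of $\int_0^{2\pi}|u-Ae^{\im\alpha}|\,d\alpha$ is an equivalent repackaging of the paper's $X_\ve+Y_\ve+Z_\ve$ analysis, yielding the same $2\pi|d|-\lambda|A|+o(\lambda)$), and your two examples for the attainability dichotomy match the paper's (the non-attainment for $z^{d_1}$ via the equality case of the reverse triangle inequality is the same argument as the paper's use of \eqref{eq:52}--\eqref{same-comp}). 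No gaps.
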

The proof relies on several lemmas. 
\bl
\label{vc2}
Let $d_1, d_2\in\Z$ be such that $d_1(d_2-d_1)>0$. If $f\in{\cal
  E}_{d_1}$ satisfies \eqref{eq:wedge} then
\begin{equation}
  \label{weak}
 \int_{{\mathbb S}^1}|\dot{f}-\dot{g}|\geq 2\pi|d_1-d_2|,\ \forall\, g\in{\cal E}_{d_2}.
\end{equation}
 If the stronger condition 
 \begin{equation}
\label{eq:stric}
   d_1 (f\wedge \dot{f})> 0 \text{ a.e. in } \so,
 \end{equation}
 holds, then
\begin{equation}
  \label{eq:str}
 \int_{{\mathbb S}^1}|\dot{f}-\dot{g}|>2\pi|d_1-d_2|,\ \forall\, g\in{\cal E}_{d_2}.
\end{equation}
\el
\begin{proof}[Proof of Lemma \ref{vc2}]
It suffices to consider the case $0<d_1<d_2$.
 Note that \eqref{eq:wedge} is equivalent to
 $\int_{\so}|\dot{f}|=\int_0^{2\pi}f\land\dot{f}=2\pi d_1$, i.e., to $f$
 being a minimizer for $\int_{\so} |v'|$ over
 $\mathcal{E}_{d_1}$ (\eqref{eq:2} for $p=1$). Therefore the same
 computation as in \eqref{uh1} yields \eqref{weak}.

Next assume the stronger condition \eqref{eq:stric}. Writing $f(e^{\im
  \theta})=e^{\im\varphi(\theta)}$, with $\varphi\in
W^{1,1}((0,2\pi))$, we then
have  $\varphi'>0$ a.e. in $(0,2\pi)$. Suppose by
contradiction that for
some $g\in{\cal E}_{d_2}$ equality holds in \eqref{weak}. 
Then \eqref{uh1} yields
\begin{equation}
\label{dot}
  |\dot g-\dot f|=|\dot g|-|\dot f|\,,~\text{a.e. in }\so\,.
\end{equation}
Writing  $g(e^{\im\theta})=e^{\im\psi(\theta)}$, with $\psi\in
W^{1,1}((0,2\pi))$, the same computation as in \eqref{eq:gradf-g}, gives
\begin{equation}
  \label{eq:52}
\left|(e^{\im \psi}-e^{\im \varphi})'\right| ^2=
\cos^2\left(\frac{\varphi-\psi}{2}\right)\, (\varphi'-\psi')^2+\sin^2\left(\frac{\varphi-\psi}{2}\right)\,  (\varphi'+\psi')^2.
\end{equation}
Combining \eqref{dot} with \eqref{eq:52} leads to 
\begin{equation}
  \label{same-comp}
\sin^2\left(\frac{\psi-\varphi}{2}\right)\, (\psi'-\varphi')^2=\sin^2\left(\frac{\psi-\varphi}{2}\right)\, (\psi'+\varphi')^2\,.
\end{equation}
 The equality \eqref{same-comp} clearly implies that $\varphi'=0$
 a.e.~on the set $\{f\neq g\}$. Since this set has positive measure,
 we reached a contradiction to \eqref{eq:stric}.
\end{proof}
\begin{lemma}
\label{ic}
If  $d_1(d_2-d_1)<0$ then
for every $f\in{\cal E}_{d_1}$ there exists $g\in{\cal
  E}_{d_2}$ such that
\begin{equation}
  \label{eq:49}
 \int_{{\mathbb S}^1}|\dot{f}-\dot{g}|<2\pi|d_1-d_2|.
\end{equation}
\end{lemma}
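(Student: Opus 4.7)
The plan is to insert a small bubble of degree $d_2-d_1$ on an arc where the lift of $f$ has strictly positive derivative, and use this positivity to beat the uniform upper bound $2\pi|d_1-d_2|$ from Step~1 of the proof of \rth{vm1}, item~1, by a definite amount. By complex conjugation (which preserves $W^{1,1}$-distances and sends $\mathcal{E}_d\mapsto\mathcal{E}_{-d}$) we may assume $d_1>0$ and $d_2<d_1$, and set $d:=d_1-d_2>0$. I would first carry out the bubble construction assuming $\varphi'$ is well-controlled on some small arc, then extend to arbitrary $f\in\mathcal{E}_{d_1}$ by a direct approximation at a Lebesgue point of $\varphi'$.

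Assume $f=e^{\im\varphi}\in\mathcal{E}_{d_1}$ and that there is an arc $J=[\theta_0,\theta_0+L]$ on which $\varphi'\in[v_0/2,A/2]$ for some $v_0>0$, where $A:=2\pi d/L$. Define $\kappa:[0,2\pi]\to\mathbb{R}$ by $\kappa\equiv 0$ on $[0,\theta_0]$, $\kappa(\theta):=-2\pi d(\theta-\theta_0)/L$ on $J$, and $\kappa\equiv-2\pi d$ on $[\theta_0+L,2\pi]$, and set $g:=fe^{\im\kappa}$. Then $e^{\im\kappa}\equiv 1$ outside $J$, so $g=f$ off $J$, and $\deg g=d_2$. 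With $\psi:=\varphi+\kappa$, the identity \eqref{eq:gradf-g} gives
\begin{equation*}
|\dot f-\dot g|^2=A^2-2A(1-\cos\kappa)\varphi'+2(1-\cos\kappa)(\varphi')^2\leq A^2-A(1-\cos\kappa)\varphi',
\end{equation*}
using $\varphi'\leq A/2$. Then $\sqrt{1-x}\leq 1-x/2$ gives $|\dot f-\dot g|\leq A-(1-\cos\kappa)\varphi'/2$ on $J$; integrating, using $\int_J(1-\cos\kappa)\,d\theta=L$ (via $u=-\kappa$ and $\int_0^{2\pi d}\cos u\,du=0$) and $\varphi'\geq v_0/2$, yields
\begin{equation*}
\int_{\so}|\dot f-\dot g|=\int_J|\dot f-\dot g|\leq 2\pi d-\tfrac{v_0L}{4}.
\end{equation*}

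For general $f\in\mathcal{E}_{d_1}$, pick a Lebesgue point $\theta_0$ of $\varphi'$ with $\varphi'(\theta_0)=v>0$ (such points exist since $\int_0^{2\pi}\varphi'=2\pi d_1>0$), and for small $\delta>0$ let $\tilde\varphi$ equal $\varphi$ outside $[\theta_0,\theta_0+\delta]$ and be linear inside. The cell-average $(\varphi(\theta_0+\delta)-\varphi(\theta_0))/\delta$ tends to $v$ as $\delta\to 0$, so for $\delta$ small $\tilde\varphi'\in[v/2,A/2]$ on the cell (with $L=\delta$), and the preceding construction applied to $\tilde f:=e^{\im\tilde\varphi}$ produces $g\in\mathcal{E}_{d_2}$ with $|\tilde f-g|_{W^{1,1}}\leq 2\pi d-v\delta/8$. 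Meanwhile $|f-\tilde f|_{W^{1,1}}$ is supported on the cell and is bounded by $\int_{\theta_0}^{\theta_0+\delta}(|\varphi'-\tilde\varphi'|+|\varphi'|\,|\varphi-\tilde\varphi|)\,d\theta=o(\delta)$ by the Lebesgue-point property. The main obstacle is precisely this quantitative interplay between the $o(\delta)$ approximation error and the $\Theta(\delta)$ bubble margin; since the former vanishes strictly faster than the latter, choosing $\delta$ small enough yields the desired $|f-g|_{W^{1,1}}<2\pi d$.
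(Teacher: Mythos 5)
Your proof is correct and follows essentially the same route as the paper's (Lemma \ref{lem:strict}): insert a linear-phase bubble $e^{\im\kappa}$ of total phase drop $2\pi d$ on a small arc around a Lebesgue point of $\varphi'=f\wedge\dot f$ where $\varphi'>0$, use the identity \eqref{eq:gradf-g}, and observe that the gain is $\Theta(\delta)$ while the Lebesgue-point error is $o(\delta)$. The only (cosmetic) difference is that you first replace $\varphi$ by its linear interpolant on the cell and pay the $o(\delta)$ cost via the triangle inequality, whereas the paper carries the error term $F=\varphi'-\alpha$ directly through the expansion of the square root.
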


  The proof of Lemma \ref{ic} is quite involved. It
  is inspired by the work of H. Brezis and J.-M. Coron (see \cite{bc,b-L}) in a
  two-dimensional setting, where the importance of a strict inequality
  like \eqref{eq:49} 
   was emphasized. The heart of the estimate is the following lemma.
   \begin{lemma}
     \label{lem:strict}
   Consider any  $f\in {\cal E}_{d_1}$ and a point $\zeta\in {\mathbb S}^1$, which is 
   a Lebesgue point of $\dot{f}$ with  $(f\land
   \dot{f})(\zeta)\neq 0$. Then for every $d_2$ such that
   \begin{equation}
     \label{eq:67}
    (d_2-d_1)\cdot (f\land\dot{f})(\zeta)<0
   \end{equation}
  there exists  $g\in {\cal E}_{d_2}$ satisfying \eqref{eq:49}.
   \end{lemma}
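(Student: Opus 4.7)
The strategy, in the spirit of the Brezis--Coron bubble insertion technique used elsewhere in the paper, is to alter $f$ on an arbitrarily small arc around $\zeta$ so that the degree changes by exactly $d_2 - d_1$ while the $W^{1,1}$-cost is strictly less than $2\pi|d_1 - d_2|$. Write $f(e^{\im\theta}) = e^{\im\va(\theta)}$ with $\va \in W^{1,1}((0, 2\pi)) \subset C^0$ and $\va(2\pi) - \va(0) = 2\pi d_1$, and let $\zeta = e^{\im\theta_0}$, so $c := \va'(\theta_0) = (f \wedge \dot f)(\zeta) \neq 0$. Assume without loss of generality that $c > 0$ (the case $c < 0$ reduces to this by conjugation $f \leftrightarrow \overline f$, which flips the signs of $d_1, d_2$, and $c$); then \eqref{eq:67} forces $d_2 < d_1$, and I set $k := d_1 - d_2 \geq 1$. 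For small $\delta > 0$, let $\psi_\delta := \va$ outside $I_\delta := [\theta_0 - \delta, \theta_0 + \delta]$ and on $I_\delta$ let $\psi_\delta$ be affine from $\va(\theta_0 - \delta)$ to $\va(\theta_0 + \delta) - 2\pi k$, so that its slope is $\sigma_\delta = c - \pi k/\delta + o(1)$ as $\delta \to 0$, by the Lebesgue point property. Setting $g_\delta := e^{\im\psi_\delta}$, the net phase increment is $2\pi d_1 - 2\pi k = 2\pi d_2$, hence $g_\delta \in {\cal E}_{d_2}$.

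\textbf{Cost estimate.} Outside $I_\delta$, $g_\delta = f$. On $I_\delta$, with $h := \psi_\delta - \va$ (so $h(\theta_0 - \delta) = 0$, $h(\theta_0 + \delta) = -2\pi k$), I use the identity
\[
|\dot g_\delta - \dot f|^2 = (\psi_\delta' - \va')^2 + 4\psi_\delta' \va' \sin^2(h/2).
\]
Let $E_\delta := \{\theta \in I_\delta : c/2 < \va' < 3c/2\}$; Chebyshev applied to $\int_{I_\delta}|\va' - c| = o(\delta)$ yields $|I_\delta \setminus E_\delta| = o(\delta)$ and $\int_{I_\delta \setminus E_\delta}|\va'| = o(\delta)$. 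For $\delta$ small, $\sigma_\delta < 0 < \va'$ on $E_\delta$, and the inequality $\sqrt{X^2 - Y} \leq X - Y/(2X)$ gives
\[
|\dot g_\delta - \dot f| \leq |\sigma_\delta| + \va' - \frac{2|\sigma_\delta|\va'}{|\sigma_\delta| + \va'}\sin^2(h/2) \quad\text{on }E_\delta,
\]
while on $I_\delta \setminus E_\delta$ I use the trivial bound $|\dot g_\delta - \dot f| \leq |\sigma_\delta| + |\va'|$. Summing, the main terms amount to $2\delta|\sigma_\delta| + \int_{I_\delta}|\va'| = 2\pi k + o(\delta)$, since $2\delta|\sigma_\delta| = 2\pi k - 2c\delta + o(\delta)$ and $\int_{I_\delta}|\va'| = 2c\delta + o(\delta)$. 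For the correction, the coefficient $2|\sigma_\delta|\va'/(|\sigma_\delta|+\va')$ is at least $2\va'(1 - O(\delta))$ on $E_\delta$; writing $2\sin^2(h/2) = 1 - \cos h$ and using $\va' = \sigma_\delta - h'$ to integrate by parts,
\[
\int_{I_\delta} \va' \cos h \, d\theta = \sigma_\delta \int_{I_\delta}\cos h\,d\theta - [\sin h]_{\theta_0 - \delta}^{\theta_0 + \delta} = o(\delta),
\]
since the boundary term vanishes ($\sin 0 = \sin(-2\pi k) = 0$) and $\int_{I_\delta}\cos h = o(\delta^2)$: indeed $h$ lies within $\sup|h - L| \leq \int_{I_\delta}|\va' - c| = o(\delta)$ of the affine function $L$ of slope $\sim -\pi k/\delta$ joining $0$ to $-2\pi k + o(\delta)$, and $\int \cos L = \sin(-2\pi k + o(\delta))/(\sigma_\delta - c) = o(\delta^2)$. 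Therefore the correction is at least $2c\delta + o(\delta)$, and
\[
\int_{\so}|\dot g_\delta - \dot f| \leq 2\pi k - 2c\delta + o(\delta) < 2\pi k = 2\pi|d_1 - d_2|
\]
for all $\delta$ sufficiently small.

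\textbf{Main obstacle.} The delicate step is extracting the genuine $\Theta(\delta)$ saving from the oscillatory correction without it being swallowed by the $o(\delta)$ errors coming from (i) the Lebesgue point behaviour of $\va'$ and (ii) the bad set $I_\delta \setminus E_\delta$, on which $|\sigma_\delta|\cdot|I_\delta \setminus E_\delta|$ is only $o(1)$ a priori. This is achieved by combining the trivial pointwise bound on the bad set with the sharp Lebesgue point estimate $\int_{I_\delta}|\va'| = 2c\delta + o(\delta)$, which absorbs the bad-set contribution into the overall $o(\delta)$ budget; the oscillatory cancellation $\int_{I_\delta}\cos h = o(\delta^2)$ (derived from the integer endpoint jump $-2\pi k$ of the rapidly varying phase $h$) is what ultimately produces the strict inequality.
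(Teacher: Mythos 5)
Your proof is correct and follows essentially the same route as the paper's: both insert a rapid extra winding of $-2\pi k$ on a shrinking interval centered at the Lebesgue point and extract the strict $O(\delta)$ gain from the cross term $4\psi'\va'\sin^2((\psi-\va)/2)$ in the identity for $|\dot g-\dot f|^2$. The only difference is technical: the paper subtracts the exactly linear phase $2d\pi\theta/\ve$ from $\va$ on a one-sided interval, so that $\psi-\va$ is exactly linear and $\int\sin^2$ is computed in closed form, whereas your affine replacement makes $h=\psi_\delta-\va$ only approximately linear and therefore requires the additional oscillatory estimate $\int_{I_\delta}\va'\cos h=o(\delta)$ via integration by parts, which you carry out correctly.
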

\begin{proof}[Proof of Lemma \ref{lem:strict}]
We may assume that condition \eqref{eq:67} is satisfied
by $\zeta=1$.
 Write 
$f(e^{\im \theta})=e^{\im\varphi(\theta)}$ with $\varphi\in
W^{1,1}((0,2\pi))$ satisfying $\varphi(2\pi)-\varphi(0)=2\pi
d_1$.  By assumption, $\theta_0=0$ is a Lebesgue point of $\varphi'=f\land
   \dot{f}$ with 
 $\varphi'(0):=\alpha\neq 0$ and we have
\begin{equation}
\label{eq:48}
\lim_{\delta\to 0}\frac{1}{\delta}\int_0^\delta |  \varphi'-\alpha|=0.
\end{equation}
Denote $d=d_1-d_2$ and note that, by \eqref{eq:67}, we have $\alpha d>0$. 
For each small $\varepsilon>0$ set $g=e^{\im\psi}$, where $\psi=\psi^\ve$ is defined by
\bes
\psi(\theta)=\begin{cases}
\varphi(\theta)-\d\frac{2d\, \pi}{\ve}\, \theta,&\text{if }\theta\in [0,\ve]\\
\varphi(\theta)-2d\, \pi,&\text{if }\theta\in [\ve, 2\, \pi]
\end{cases}.
\ees


By
\eqref{eq:52}, we have
\be
  \label{eq:42}
   \int_{{\mathbb S}^1}|\dot{g}-\dot{f}|
   =\left(\frac{2|d|\, \pi }{\varepsilon}\right)\int_0^\varepsilon h(\theta)\, d\theta,
\ee
where
\be
\label{eq:42*}
h(\theta)=h_\ve(\theta):=\left[1+4\sin^2\left(\frac{d\, \pi\,\theta}{\varepsilon}\right)\left\{-\frac{\ve\varphi'(\theta)}{2d\, \pi}+\left(\frac{\ve\varphi'(\theta)}{2d\, \pi}\right)^2\right\}\right]^{1/2}.
\ee

Set $F:=\varphi'-\alpha$ and write
\be
\label{ia2}
(h_\ve(\theta))^2=X_\ve+Y_\ve+Z_\ve,
\ee
where
\be
\label{ia3}
X_\ve=X_\ve(\theta):=1-\frac{2\ve\alpha}{d\, \pi}\left(1-\frac{\ve\alpha}{2 d\, \pi}\right)\sin^2\, \left(\frac{d\, \pi\, \theta}{\ve}\right)= 1- \frac{2\ve\alpha}{d\, \pi}\sin^2\, \left(\frac{d\, \pi\, \theta}{\ve}\right)+ O(\ve^2),
\ee
\be
\label{ia4}
Y_\ve=Y_\ve(\theta):=\frac{2\ve F}{d\, \pi}\left(-1+\frac{\ve\alpha}{d\, \pi}\right)\sin^2\, \left(\frac{d\, \pi\, \theta}{\ve}\right)=O(\ve F),
\ee
and
\be
\label{ia5}
Z_\ve=Z_\ve(\theta):=\frac{\ve^2 F^2}{(d\, \pi)^2}\sin^2\, \left(\frac{d\, \pi\, \theta}{\ve}\right)=O(\ve^2 F^2).
\ee

 Since $X_\ve\ge 1/4$ for small $\ve$, for such $\ve$ we deduce from \eqref{ia2} that
 \be
 \label{ia6}
 h_\ve(\theta)\le (X_\ve)^{1/2}+|Y_\ve|+(Z_\ve)^{1/2}.
 \ee
 
  Integrating \eqref{ia6} over $(0,\ve)$ and using \eqref{eq:48}, \eqref{ia4} and \eqref{ia5} yields
\be
\label{ia7}
\int_0^\ve h_\ve(\theta)\, d\theta\le \int_0^\ve (X_\ve(\theta))^{1/2}\, d\theta+ o(\ve^2).
\ee 

From \eqref{ia3} we have
\be
\label{ia8}
(X_\ve)^{1/2}=1- \frac{\ve\alpha}{d\, \pi}\sin^2\, \left(\frac{d\, \pi\, \theta}{\ve}\right)+ O(\ve^2).
\ee

Combining \eqref{eq:42}, \eqref{ia7} and \eqref{ia8} we obtain
\bes
\int_{\so}|\dot g-\dot f|\le \frac{2|d|\, \pi}{\ve}\left(\ve-\frac{\ve\alpha}{d\, \pi}\int_0^\ve \sin^2\, \left(\frac{d\, \pi\, \theta}{\ve}\right)+o(\ve^2)\right)=2|d|\, \pi -\ve|\alpha|+o(\ve),
\ees
so that \eqref{eq:49} holds for sufficiently small $\ve$.
\end{proof}
\begin{proof}[Proof of Lemma \ref{ic}]
  It suffices to consider the case where $d_1>0$, so by
assumption $d_2-d_1<0$. Since
$\int_{{\mathbb S}^1}(f\land\dot{f})=2\pi d_1>0$, the set
\begin{equation*}
A:=\{\zeta\in {\mathbb S}^1;\, \zeta\text{ is a Lebesgue point of $\dot{f}$ with
  $(f\land\dot{f})(\zeta)>0$}\}, 
\end{equation*}
 has positive measure.
Applying
\rlemma{lem:strict} to any point $\zeta\in A$ we conclude that there exists $g\in{\cal
  E}_{d_2}$ for which \eqref{eq:49} holds.
\end{proof}
\begin{proof}[Proof of \rprop{prop:attain}]

{} ${}$

\noindent
{\it Step 1.} Proof of item {\it 1}\\
Assume without loss of generality
  that $0<d_1<d_2$. Let $f\in\mathcal{E}_{d_1}$
  satisfy \eqref{eq:wedge}. By \eqref{weak}, $d_{W^{1,1}}(f,\mathcal{E}_{d_2})\geq
  2\pi(d_2-d_1)$. Since $\Dist_{W^{1,1}}({\cal E}_{d_1}, {\cal
    E}_{d_2})=2\pi(d_2-d_1)$ (by \eqref{XX}) we obtain that $f$
  satisfies \eqref{eq:att}.
 On the other hand, for $f\in\mathcal{E}_{d_1}$ for which
 \eqref{eq:wedge} does not hold we conclude from \rlemma{lem:strict}
 that $d_{W^{1,1}}(f,\mathcal{E}_{d_2})<\Dist_{W^{1,1}}({\cal E}_{d_1}, {\cal
    E}_{d_2})=2\pi(d_2-d_1)$, so \eqref{eq:att} does not hold for $f$.
  
 For $f\in\mathcal{E}_{d_1}$ satisfying condition \eqref{eq:stric} (we may take for example $f(\zeta)=\zeta^{d_1}$)
  we get from \eqref{eq:str} that \eqref{eq:strong-att} is violated
  (although \eqref{eq:att} holds). Finally to show that
  \eqref{eq:strong-att} occurs for some $f$, choose
  $\varphi\in W^{1,1}((0,2\pi))$ such that for some $a\in(0,2\pi)$ we have: \\
{\rm (i)} $\varphi'\geq 0$ on $[0,a]$.\\
{\rm (ii)} $\varphi(0)=0, \varphi(a)=2\pi d_1$.\\
{\rm (iii)} $\varphi=2\pi d_1$ on $[a,2\pi]$.\\
Next define $\psi$ on $[0,2\pi]$ by:
\begin{equation*}
  \psi(\theta)=\begin{cases} \varphi(\theta), &\text{ for }\theta\in[0,a]\\
           2\pi\, d_1+2\pi\, (d_2-d_1)\, \d\frac{\theta-a}{2\pi-a},&\text{ for
           }\theta\in(a,2\pi]
\end{cases}.
\end{equation*}
 Setting $f(e^{\im\theta})=e^{\im\varphi(\theta)}$ and
 $g(e^{\im\theta})=e^{\im\psi(\theta)}$ we clearly have $f\in
 \mathcal{E}_{d_1}$ and $g\in \mathcal{E}_{d_2}$. Since $f$ satisfies
 \eqref{eq:wedge} we know
 that $d_{W^{1,1}}(f,\mathcal{E}_{d_2})=2\pi(d_2-d_1)$. But clearly  also
 $|f-g|_{W^{1,1}}=2\pi\, (d_2-d_1)$.

\smallskip
\noindent
{\it Step 2.} Proof of item {\it 2}\\
 The result follows directly from \rlemma{ic} and
\eqref{XX}.
\end{proof}
\begin{remark}
\label{rem:zero}
  If $d_1=0$ and $d_2\neq 0$ then for every {\em non constant} $f\in
  {\cal E}_0$ we have $d_{W^{1,1}}(f,\mathcal{E}_{d_2})<\Dist_{W^{1,1}}({\cal E}_{0}, {\cal
    E}_{d_2})=2\pi|d_2|$. This implies that a constant map is the only
  map for which \eqref{eq:att} holds.
  Indeed, since $\int_{{\mathbb S}^1}(f\land\dot{f})=0$, there are
  Lebesgue points of $f\land\dot{f}$ of both positive and negative
  sign. Hence, for every $d_2\neq0$ we can find a Lebesgue point for
  which  \eqref{eq:67} is satisfied, and the result follows from
  \rlemma{lem:strict}.
\end{remark}

\section{$W^{1, p}$ maps, with $1<p<\infty$}
\label{ub6}

\begin{proof}[Proof of Theorem \ref{bgl20} when $1<p<\infty$]
We  first sketch the proof of the inequality \enquote{$\geq$} in \eqref{ub80}.
  Given any $f\in\mathcal{E}_{d_1}$ and
  $g\in\mathcal{E}_{d_2}$,  set $w:=f\, \overline{g}\in\mathcal{E}_{d}$, with $d:=d_1-d_2$. 
Let $\widetilde w:=T\circ w\in\mathcal{E}_{d}$ where, as in
  \cite{rs,bms},
$T:\so\to\so$ is defined  by
 \begin{equation}
\label{eq:41}
     T(e^{\im \theta})=e^{\im \va}\text{ with }\va=\va(\theta)=\pi\sin(\theta/2),\ \forall\,\theta\in(-\pi,\pi].
   \end{equation}
 Noting that $|e^{\im \theta}-1| =\d\frac{2}{\pi}|\va|$, we obtain as in \cite{rs, bms} (with $\p_\tau$ standing for the tangential derivative)
  
  \be
    \label{eq:1}
    \begin{aligned}
  \int_{\so} |\p_\tau (f-g)|^p&\geq  \int_{\so} \left|\p_\tau |f-g|\right|^p=\int_{\so}
  \left|\p_\tau|f\,\overline {g}-1|\right|^p=
  \int_{\so}\left|\p_\tau |w-1|\right|^p\\&= 
\left(\frac{2}{\pi}\right)^p\int_{\so} |\p_\tau\widetilde w |^p\geq \left(\frac{2}{\pi}\right)^p\inf_{v\in \mathcal{E}_d}\int_{\so} |\dot v|^p.
  \end{aligned}
  \ee
The inequality \enquote{$\geq$} in \eqref{ub80} clearly follows from
\eqref{eq:1} and the next claim:
\begin{equation}
  \label{eq:2}
 \min_{v\in \mathcal{E}_d}\int_{\so} |\dot v|^p=2|d|^p\, \pi.
 \end{equation}
 
 To verify \eqref{eq:2} we first associate to each $v\in
 \mathcal{E}_d$ a function $\psi\in W^{1,p}((-\pi, \pi) ;\R)$ such that
 $v(e^{\im\theta})=e^{\im\psi(\theta)}$, $\theta\in[-\pi,\pi]$, with
 $\psi(\pi)-\psi(-\pi)=2d\, \pi$. We then have, invoking H{\"o}lder inequality,
\begin{equation*}
\int_{\so} |\dot v|^p=\int_{-\pi}^{\pi}|\psi'|^p\geq \frac{(2|d|\, \pi)^p}{(2\pi)^{p-1}},
\end{equation*}
whence the inequality \enquote{$\geq$} in \eqref{eq:2}. On the other hand,
the function $\widetilde w(e^{\im\theta})=e^{\im d\theta}$ clearly gives
equality in \eqref{eq:2}, completing the proof of \eqref{eq:2}. Note
that $\widetilde w$ is the unique minimizer in \eqref{eq:2}, up to rotations.
 The proof of the inequality \enquote{$\leq$} in \eqref{ub80} can be carried out
 using an explicit construction, like the proof in \cite{rs} for
 $p=2$.\end{proof}

Next we turn to the question of attainment of the infimum in
\eqref{ub80}. 
\begin{proof}[Proof of Theorem \ref{ub90}, items 2 and 3]
The proof of the case $p\geq2$ is identical to the one
given in \cite{rs} for $p=2$, so we consider here  only item 
{\it 3} (i.e., we let  $1<p<2$).

\smallskip
\noindent
{\it Step 1.} The infimum in \eqref{ub80} is achieved when $d_2=-d_1$\\
Assume that  $d_2=-d_1$. Let
$d:=d_1-d_2=2d_1$. We saw above that $\widetilde w(e^{\im\theta})=e^{\im
  d\theta}$ realizes the minimum in \eqref{eq:2}. Consider
$S:=T^{-1}:\so\to\so$ (see \eqref{eq:41}), given explicitly by
\bes
S(e^{\im\theta})=e^{\im\psi},  \text{ with }
\psi(\theta)=2\arcsin (\theta/\pi),\,\forall\theta\in [-\pi,\pi].
\ees
 Although $S$ is not Lipschitz, we do have $w:=S\circ \widetilde w\in
 W^{1,p}(\so;\so)$ (i.e., $w\in \mathcal{E}_d$). Indeed, this amounts
 to $\d\frac{1}{\sqrt{1-t^2}}\in L^p((1-\delta,1))$, which holds since
 $p<2$. 
Since $d$ is even and $w$ has degree $d$, there exists $f\in \mathcal{E}_{d_1}$
 satisfying $w=f^2$. We let
 $g:=\overline f\in\mathcal{E}_{d_2}$, so that $w=f\,\overline{g}$.
 Note that $f-g$ takes only purely imaginary values, and therefore 
 \begin{equation}
\label{eq:4}
   |\p_\tau (f-g)|=\left|\p_\tau|f-g|\right|\  \text{ a.e. on }\so.
 \end{equation}
For these particular $f, g, w$ and $\widetilde w$, we get, using
\eqref{eq:4} that all the inequalities in \eqref{eq:1} are actually
equalities, and we see that the infimum in \eqref{ub80} is attained.

\smallskip
\noindent
{\it Step 2.}
If the infimum in \eqref{ub80} is achieved, then $d_2=-d_1$\\
Assume that the infimum in \eqref{ub80} is achieved by two
functions $f\in\mathcal{E}_{d_1}$ and $g\in {\cal E}_{d_2}$. Set $d:=d_1-d_2$, $w:=f\, \overline{g}$
and $\widetilde w:=T\circ w$. We then have $w,\widetilde w\in\mathcal{E}_{d}$. We may
assume that $d>0$.
From the fact that both inequalities in \eqref{eq:1} must be
equalities we deduce that\\[2mm]
{\rm (i)} $\widetilde w$ is a minimizer in \eqref{eq:2}\\
and\\
{\rm (ii)} \eqref{eq:4} holds.\\[2mm]
From {\rm (i)} it follows that $\widetilde w(e^{\im\theta})=e^{\im
  (d\theta+C)}$ for some constant $C$, and we may assume
that $C=0$. Therefore, 
\begin{equation*}
  w^{-1}(1)=\widetilde{w}^{-1}(1)=\{1,\omega,\omega^2,\ldots,\omega^{d-1}\},\text{
    with }\omega=e^{\im 2\pi/d}.
\end{equation*}
 On the small arc $I_j$ between $\omega^j$ and $\omega^{j+1}$ we may write
 $f-g=\rho\, e^{\im\psi}$ with $\rho=|f-g|$ and $\psi\in W^{1,p}_{loc}$, and we have
 \begin{equation*}
   |\p_\tau (f-g)|^2=\rho^2[\dot\psi]^2+[\dot\rho]^2.
 \end{equation*}
 By  {\rm (ii)}, $\dot\psi=0$ on $I_j$, so that $\psi$ is constant on $I_j$, say $\psi=\alpha_j$ on $I_j$. The equality $f-g=\rho\, e^{\im \alpha_j}$ on $I_j$ implies that $g=e^{\im (2\alpha_j-\pi)}\, \overline f$ on $I_j$, and therefore $g\land \dot g=- f\land \dot f$ on each $I_j$. Since this is true on   each $I_j$, we finally conclude that $d_2=-d_1$.
\end{proof}

\section{$\wsp$ maps, with $sp>1$}
\label{ua7}

%
\begin{proof}[Proof of Theorem \ref{uc3}, item 2]

${}$

\noindent
{\it Step 1.} 
%
%
Proof of \enquote{$\lesssim$} in \eqref{ue2}\\
Fix a smooth map $h\in {\cal E}_1$ such that $h(z)\equiv 1$ when Re $z\le 0$. 

 Given $d_2$, consider a smooth map $g\in {\cal E}_{d_2}$ such that $g(z)\equiv 1$ when Re $z\ge 0$. Set $f:=h^{d_1-d_2}\, g\in {\cal E}_{d_1}$. Then 
\be
\label{bgn5}
|f-g|_{\wsp}\lesssim |d_1-d_2|^s. 
\ee

Indeed, estimate \eqref{bgn5} is clear when $s$ is an integer, since $f-g=h^{d_1-d_2}-1$. The general case follows via  Gagliardo-Nirenberg. 

\smallskip
\noindent
{\it Step 2.} Proof of \enquote{$\gtrsim$} in \eqref{ue2} when $0<s\le 1$\\
We rely on an argument similar to the one in   Step 2 in the proof of Theorem \ref{bgl20} in Section \ref{sec:ua5}. Assume that $d:=d_1-d_2>0$, and that $f(1)=g(1)$. Write $f(e^{\im\theta})=e^{\im\va(\theta)}\, g(e^{\im\theta})$, with $\va\in W^{s,p}((0, 2\pi))$ and $\va(0)=0$.  Let  $0= t_0<\tau_0<\cdots< \tau_{d-1}<t_{d}= 2\pi$ be such that $(f-g)(e^{\im t_j})=0$ and $|(f-g)(e^{\im\tau_j})|=2$. By scaling and the hypotheses $0<s\le 1$ and $sp>1$, we have
\be
\label{bgn6}
|h(b)-h(a)|\lesssim (b-a)^{s-1/p}|h|_{\wsp(( a, b))},\ \fo a <  b,\   \fo h\in \wsp((a,b)).
\ee
Applying \eqref{bgn6} to $h:=(f-g)(e^{\im\theta})$ on $(a,b):=(t_j, \tau_j)$, $j=0,\ldots, d-1$, we obtain that $|h|_{\wsp((t_j, \tau_j))}\gtrsim 1/(\tau_j-t_j)^{s-1/p}$, and thus 
\bes
|f-g|_{\wsp}^p\gtrsim \sum_{j=0}^{d-1}|h|_{\wsp((t_j, \tau_j))}^p\gtrsim \sum_{j=0}^{d-1}\frac 1{(\tau_j-t_j)^{sp-1}}\gtrsim d^{sp}, 
\ees
the latter inequality following from Jensen's inequality applied to the function $x\mapsto 1/x^{sp-1}$, $x>0$.

\smallskip
\noindent
{\it Step 3.} Proof of \enquote{$\gtrsim$} in \eqref{ue2} when $s> 1$\\
The key ingredient in Step 4 is the Gagliardo-Nirenberg type inequality 
\be
\label{zb1}
|f|_{W^{\theta s, p/\theta}}\le C_{\theta, s, p}|f|_{\wsp}^{\theta}\|f\|_{L^\infty}^{1-\theta},\ \fo s>0,\ 1\le p<\infty\text{ such that }(s,p)\neq (1,1),\ \fo \theta\in (0,1).
\ee

Let us note that, if $f, g:\so\to\so$ and $\deg f\neq \deg g$, then (by the argument leading to \eqref{vl1}) 
\be
\label{zb5}
\|f-g\|_{L^\infty}=2.
\ee

By \eqref{zb1} and \eqref{zb5}, we find that for every  $s$, $p$, $\theta$ as in \eqref{zb1} we have
\be
\label{zb2}
\dist_{\wsp}({\cal E}_{d_1}, {\cal E}_{d_2})\ge C'_{\theta, s, p}[\dist_{W^{\theta s, p/\theta}}({\cal E}_{d_1}, {\cal E}_{d_2})]^{1/\theta}, \   \fo d_1, d_2\in\Z.
\ee

If we take, in \eqref{zb2}, $\theta$ such that $\theta s<1$, we obtain Step 4 from Step 3.
 \end{proof}

  \section{Maps $f:\sn\to\sn$}
 \label{ve1}
%
%
   
  \subsection{A useful construction}
  \lab{tA3}

   Throughout Section \ref{ve1} we will make an extensive use of special smooth maps $f:\sn\to\sn$, $N\ge 1$. Such maps \enquote{live} on a small spherical cap, say $B_R(\sigma)$, where $B_R(\sigma)$ is the geodesic ball of radius $R<1$ centered at some point $\sigma$ of $\sn$, and are constant on $\sn\setminus B_R(\sigma)$. Since the construction is localized we may as well work first on a flat ball $B_R(0)$ centered at $0$ in $\R^N$ and then we will transplant $f$ to $B_R(\sigma)$, thereby producing a map $f:\sn\to\sn$. On $B_R(0)$, the map $f$ is determined by a smooth function $F:[0,R]\to\R$ and a smooth map $h:{\mathbb S}^{N-1}\to {\mathbb S}^{N-1}$. 
   
   For simplicity we start with the case $N\ge 2$ since the case $N=1$ is somewhat \enquote{degenerate} and will be discussed later. 
   
    Fix a smooth function $F:[0,R]\to\R$ satisfying
    \begin{gather}
    \lab{B.1}
    F(r)=0\text{ for }r\text{ near }0.\\
    \lab{B.2}
    F(r)=k\, \pi\text{ for }r\text{ near }R,\text{ where }k\in\Z.
    \end{gather}
    
    We may now define $f:B_R(0)\to\sn$ by
    \be
    \lab {B.20}
    f(x)=(\sin F(|x|)\, h(x/|x|),\, (-1)^N\, \cos F(|x|)).
    \ee
    
    Note that $f$ is well defined and smooth on $B_R(0)$ (by \eqref{B.1}) and that $f$ is constant near $\partial B_R(0)$ (by \eqref{B.2}). More precisely 
    \bes
    f(0)=(0,0,\ldots, 0,(-1)^N)=\begin{cases}
  {\n},&\text{if }N\text{ is even}\\
  {\s},&\text{if }N\text{ is odd}
  \end{cases}
  \ees
   and 
  \bes
\text{for }x\text{ near }\partial B_R(0),\   f(x)=(0,0,\ldots, 0, (-1)^N\, \cos k\pi)={\bf C}:=\begin{cases}
  {\n},&\text{if }N+k\text{ is even}\\
  {\s},&\text{if }N+k\text{ is odd}
  \end{cases};
  \ees 
  here ${\n}=(0,0,\ldots, 0,1)$ and ${\s}=(0,0,\ldots 0, -1)$ are the north pole and the south pole of $\sn$. We transport $f$ into $B_R(\sigma)\subset\sn$ via a fixed orientation preserving diffeomorphism and extend it by the value  ${\bf C}$ on $\sn\setminus B_R(\sigma)$.
    In this way we have defined a smooth map $f:\sn\to\sn$.

   For the purpose of Lemmas \ref{tA} and \ref{tM} below it suffices to assume that $F:[0,R]\to\R$ is merely continuous and satisfies $F(0)=0$, $F(R)=k\pi$, so that $f:\sn\to\sn$ is a well-defined continuous map.
    
   \bl
   \lab{tA}
  Let $k\in \{ 0, 1\}$. We have 
   \be
   \lab{t1a}
   \deg f=k\, \deg h.
   \ee
   \el
   \begin{proof} We emphasize the fact that here we assume $N\ge 2$, although the conclusion still holds when $N=1$ (see below). 

   It will be convenient to assume that $F$ satisfies \eqref{B.1} and \eqref{B.2}; the general case follows by density.
   
   The cases where $k=0$ (respectively $d=0$) are trivial via homotopy to $F\equiv 0$ (respectively $h\equiv C$).    
   With no loss of generality, we assume that $d:=\deg h> 0$ and $k=1$.  
   
   Since $f$ is constant outside $B_R(\sigma)$, it suffices to determine the degree of $f_{|B_R(\sigma)}$, and then we may as well work on the flat ball $B_R(0)\subset\R^N$. We will work in the class of maps 
   \bes
  C^0_{\bf C}(\overline B_R(0) ; \sn):= \{ g:\overline B_R(0)\to \sn;\, g={\bf C}\text{ on }\partial B_R(0)\},
   \ees
   which have a well-defined degree (since they can be identified with maps in $C^0(\sn ; \sn)$). 
   
   \smallskip
   \noindent
   {\it Step 1.} Proof of \eqref{t1a} when $d=1$ and $k=1$\\
   This case can be reduced by homotopy to the case $h=\textrm{Id}$ and $F:[0,R]\to [0,\pi]$ is  non decreasing. 
   In this case, for almost every $\bfs\in \sn$ the equation $f(\bft)=\bfs$ has exactly one solution $\bft$, and $f$ is orientation preserving at $\bft$. Thus $\deg f=1$.
   
   
   \smallskip
   \noindent
   {\it Step 2.} Proof of \eqref{t1a} when $d>1$ and $k=1$\\
   Consider smooth maps $h_1, h_2,\ldots, h_d:{\mathbb S}^{N-1}\to {\mathbb S}^{N-1}$ of degree $1$ which \enquote{live} in different regions $\omega_1,\ldots, \omega_d$ of ${\mathbb S}^{N-1}$, in the sense that $\overline{\omega_j}\cap \overline{\omega_k}=\emptyset$ when $j\neq k$ and $h_j=(0,0,\ldots, 0, 1)$ in ${\mathbb S}^{N-1}\setminus \omega_j$, $\fo j$. We glue these maps together and obtain a smooth map $\widetilde h:{\mathbb S}^{N-1}\to{\mathbb S}^{N-1}$ of degree $d$. Since $h$ and $\widetilde h$ are homotopic within $C^\infty ({\mathbb S}^{N-1} ; {\mathbb S}^{N-1})$, the map $f$ and the map $\widetilde f$ corresponding to $\widetilde h$ (via \eqref{B.20}) are homotopic within $C^\infty (B_R(0) ; \sn)$. Thus $\deg f=\deg \widetilde f$.  
   
   On the other hand, let $f_j$ be the map associated to $h_j$ via \eqref{B.20}. Set 
\bes
   \Omega_j:=\{ r\, y ;\,  y\in\omega_j,\, 0<r<R\}.
 \ees

 Note that the $\Omega_j$'s are mutually disjoint.
 
%
If $x\in \overline B_R(0)\setminus\Omega_j$, then $f_j(x)\in {\cal C}$, where 
 \bes
 {\cal C}:=\{ (0,0,\ldots, 0,\, \sin\theta, \,\cos\theta);\, \theta\in \R\}\subset\sn
 \ees 
(since for such $x$ we have $h(x/|x|)=(0,0,\ldots, 0, 1)$). 
Similarly, if $x\in \overline B_R(0)\setminus\cup_j\Omega_j$, then $f(x)\in {\cal C}$.

 Since ${\cal C}$ has null measure in $\sn$  (here we use $N\ge 2$), we may find some  value $z\in \sn\setminus {\cal C}$  regular for $f$ (and thus for each $f_j$). For such $z$, we have
 \bes
 \deg f=\sum_{x\in f^{-1}(z)}\, \sgn\, \Jac f(x)=\sum_j \sum_{x\in f^{-1}(z)\cap\Omega_j}\, \sgn\, \Jac f(x)=\sum_j\deg f_j=d,
 \ees
 the latter equality following from Step 1.
 \end{proof}

The conclusion of Lemma \ref{tA} also holds for $N=1$ and arbitrary $k$, but this requires a separate argument. When $N=1$, we have ${\mathbb S}^{N-1}={\mathbb S}^0=\{ -1, 1\}$ and we have (modulo symmetry) only two maps $h: {\mathbb S}^0\to {\mathbb S}^0$, namely
\begin{gather*}
h_1(-1)=-1,\ h_1(1)=1,\\
h_2(-1)=1,\ h_2(1)=1.
\end{gather*}

Then $\deg h_1=1$ and $\deg h_2=0$.

The associated maps $f_1, f_2$ defined on $B_R(0)=(-R, R)$ with values in $\so$ are 
\begin{gather*}
f_1(x)=\begin{cases}
(\sin F(x),\, -\cos F(x)),&\text{if }x>0\\
(-\sin F(-x),\, -\cos F(-x)),&\text{if }x<0
\end{cases},\\
f_2(x)=\begin{cases}
(\sin F(x),\, -\cos F(x)),&\text{if }x>0\\
(\sin F(-x),\, -\cos F(-x)),&\text{if }x<0
\end{cases}.
\end{gather*}

Clearly $f_1=e^{\im\va_1}$ and $f_2=e^{\im\va_2}$, where
\begin{gather*}
\va_1(x)=\begin{cases}
-\pi/2+F(x),&\text{if }x>0\\
-\pi/2-F(-x),&\text{if }x<0
\end{cases},\\
\va_2(x)=\begin{cases}
-\pi/2+F(x),&\text{if }x>0\\
-\pi/2+F(-x),&\text{if }x<0
\end{cases}.
\end{gather*}

Thus
\bes
\deg f_1=\frac 1{2\pi}\, (\va_1(R)-\va_1(-R))=\frac{2 F(R)}{2\pi}=k
\ees
and
\bes
\deg f_2=\frac 1{2\pi}\, (\va_2(R)-\va_2(-R))=0.
\ees

 For the record, we call the attention of the reader to the following generalization of Lemma \ref{tA}
 \bl
 \lab{tM}
 For every $k\in\Z$, 
 \be
 \lab{tM2}
 \deg f=\begin{cases}
 k\, \deg h,&\text{if }N\text{ is odd}\\
 \deg h,&\text{if }N\text{ is even and }k\text{ is odd}\\
 0,&\text{if }N\text{ is even and }k\text{ is even}
 \end{cases}.
 \ee
 \el
 \begin{proof}
 Assume e.g. that $k\ge 2$. [The case $k<0$ is handled similarly and is left to the reader.] 

As explained in the proof of Lemma \ref{tA}, we may work in the class $C^0_{\bf C}(\overline B_R(0) ; \sn)$.

We may assume via homotopy that $F(r)=k\, \pi\, r/R$.  Set $r_j=j\, R/k$, $j=0,\ldots, k$.   Consider the  functions
   \bes
   F_j(r):=\begin{cases}
   0,&\text{if }r<r_{j-1}\\
   F(r)-(j-1)\, \pi,&\text{if }r_{j-1}\le r< r_j\\
   \pi,&\text{if }r\ge r_j
   \end{cases},\ j=1,\ldots, k.
   \ees 
   
   Consider also the maps $f_j$ corresponding to $F_j$ via \eqref{B.20}. Then $f$ is obtained by gluing the maps $(-1)^{j-1}\, f_j$. By Lemma \ref{tA}, we have
   \be
   \lab{tM4}
   \deg f_j=\deg h,\  j=1,\ldots, k.
   \ee
   
   We next note that 
   \be
   \lab{tM5}
  \text{for every }g\in C^0_{\bf C}(\overline B_R(0) ; \sn),\  \deg (-g)=\begin{cases}
   \deg g,&\text{if } N\text{ is odd}\\
    -\deg g,&\text{if }N\text{ is even}
   \end{cases}.
   \ee
   
  By \eqref{tM4} and \eqref{tM5},  we have
   \bes
   \deg f=\sum_j \deg \left((-1)^{j-1}\, f_j\right)=\begin{cases}
 k\, \deg h,&\text{if }N\text{ is odd}\\
 \deg h,&\text{if }N\text{ is even and }k\text{ is odd}\\
 0,&\text{if }N\text{ is even and }k\text{ is even}
 \end{cases}.\qedhere
   \ees
 \end{proof}

\subsection{Proof of Theorem \ref{vo1}, item {\it 2}}
 
\smallskip
 \noindent
 {\it Step 1.} Proof of the lower bound in  \eqref{ve5}\\
 Since we assume that 
  \be
  \label{ib1}
  [s>0\text{ and }sp>N]\text{ or }[s= N\text{ and }p=1],
  \ee
the space $\wsp$ is embedded continuously in the
space of continuous functions, and there exists a constant $C_{N, s, p}$
such that
\begin{equation}
  \label{eq:3}
\left\|f-\fint_{\sn} f  \right\|_{L^\infty}\leq C_{N, s, p} |f|_{\wsp},\ \fo f\in \wsp.
\end{equation}

 Step 1 is a direct consequence of the
next lemma.
\begin{lemma}
  \label{lem:lb}
  In all spaces $\wsp$ satisfying \eqref{ib1} we have, for all $f\in {\cal E}_{d_1}$,
  $g\in  {\cal E}_{d_2}$, $d_1\neq d_2$,
  \begin{equation}
    \label{eq:5}
d_{\wsp}(f,g)\geq\frac{1}{C_{N, s, p}},
 \end{equation}
 where $C_{N, s, p}$ is the constant in \eqref{eq:3}.
\end{lemma}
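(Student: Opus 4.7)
Setting $h:=f-g$ and $\bar h:=\fint_{\sn} h$, the embedding estimate \eqref{eq:3} applied componentwise to the $\R^{N+1}$-valued map $h$ reads $\|h-\bar h\|_{L^\infty} \le C_{N,s,p}\,|h|_{\wsp}$, whose right-hand side equals $C_{N,s,p}\,d_{\wsp}(f,g)$. Hence \eqref{eq:5} will follow once I establish the purely geometric lower bound $\|h-\bar h\|_{L^\infty}\ge 1$. Since $|h(x)-h(y)|\le 2\|h-\bar h\|_{L^\infty}$ for every $x,y$, my plan reduces to exhibiting two points $x_0,y_0\in\sn$ with $|h(x_0)-h(y_0)|\ge 2$.

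For the first point I would simply quote the $C^0$ argument behind \eqref{vl1}: under \eqref{ib1} the embedding $\wsp\hookrightarrow C^0$ makes $f$ and $g$ continuous, so on the compact manifold $\sn$ the supremum $\|f-g\|_{L^\infty}$ is attained. The same argument shows that $\|f-g\|_{L^\infty}=2$ whenever $\deg f\ne\deg g$, since otherwise the linear normalized homotopy would force $\deg f=\deg g$. This yields $x_0\in\sn$ with $g(x_0)=-f(x_0)$; setting $v:=f(x_0)\in\sn$ I obtain $h(x_0)=2v$ with $|h(x_0)|=2$.

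The crux is to choose $y_0$ with $h(y_0)\cdot v\le 0$, since expanding
\[|h(x_0)-h(y_0)|^2 = |h(x_0)|^2 - 2\,h(x_0)\cdot h(y_0) + |h(y_0)|^2 = 4 - 4\,v\cdot h(y_0) + |h(y_0)|^2 \ge 4\]
would then close the argument. The topological input I plan to use is the classical fact that a continuous map $\sn\to\sn$ of nonzero degree must be surjective (otherwise it factors through a contractible open set $\sn\setminus\{\text{missed value}\}$, hence is null-homotopic). Because $d_1\ne d_2$, at least one of $d_1,d_2$ is nonzero. If $d_2\ne 0$ then $g$ is surjective, so I can pick $y_0$ with $g(y_0)=v$, giving $h(y_0)\cdot v=f(y_0)\cdot v-1\le 0$; if instead $d_1\ne 0$, then $f$ is surjective and I pick $y_0$ with $f(y_0)=-v$, yielding $h(y_0)\cdot v=-1-g(y_0)\cdot v\le 0$.

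I expect this last step---passing from the single peak point $x_0$ to a complementary ``valley'' $y_0$---to be the only real conceptual obstacle: the bound $\|f-g\|_{L^\infty}=2$ alone locates only one extreme value of $h$, and it is the surjectivity dichotomy coming from $d_1\ne d_2$ that allows one to pair it with a second value satisfying $h(y_0)\cdot v\le 0$. The other ingredients (the embedding, the oscillation-versus-$L^\infty$ comparison, and the $C^0$ attainment) are entirely routine.
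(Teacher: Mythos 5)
Your proof is correct. It uses the same three ingredients as the paper's proof (the embedding constant in \eqref{eq:3}, the identity $\|f-g\|_{L^\infty}=2$ from the homotopy argument behind \eqref{vl1}, and the fact that a continuous non-surjective map $\sn\to\sn$ has degree zero), but the finishing step is organized differently. The paper writes $A:=\fint_{\sn}(f-g)$ and $r:=C_{N,s,p}|f-g|_{\wsp}$, gets $2\le |A|+r$ from \eqref{eq:3}, and then shows $|A|\le r$ by a containment argument: if $|A|>r$ then $f(\sn)\subset \sn+A+\overline B(0,r)$ misses the point $-A/|A|$, so both $f$ and $g$ would have degree zero, contradicting $d_1\neq d_2$; hence $2\le 2r$. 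You instead bound the oscillation of $h=f-g$ from below by exhibiting two points $x_0,y_0$ with $|h(x_0)-h(y_0)|\ge 2$, using the attained antipodal point for $x_0$ and the surjectivity of whichever map has nonzero degree for $y_0$. Your version avoids the case split on $A$ and the set-containment picture, at the cost of a small dichotomy on which of $d_1,d_2$ is nonzero; both arguments yield the same constant $1/C_{N,s,p}$. The only point worth making explicit is that under \eqref{ib1} the Sobolev degree coincides with the classical degree of the continuous representative, which is what licenses the homotopy and surjectivity facts you invoke; the paper uses exactly the same identification.
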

\begin{proof}[Proof of Lemma \ref{lem:lb}]
Recall (see \eqref{vl1}) that
\be
\label{ib2}
\|f-g\|_{L^\infty}= 2.
\ee
 
 From \eqref{eq:3} we have
 \be
 \label{ib3}
 \left\| (f-g)-\fint_{\sn} (f-g)  \right\|_{L^\infty}\le C_{N, s, p}|f-g|_{\wsp},
 \ee
 so that 
 \be
 \label{ib4}
 2= \|f-g\|_{L^\infty}\le |A|+r,
 \ee
 where $A:=\fint_{\sn}(f-g)$ and $r:=C_{N, s, p}|f-g|_{\wsp}$.
 
  We may assume that $A\neq 0$, otherwise \eqref{eq:5} is clear. From \eqref{ib3} we have
 \be
 \label{ib5}
 f(\sn)\subset \sn +A+\overline B(0, r).
 \ee
 
 Clearly, 
 \bes
 -\frac A{|A|}\not\in \sn +A+\overline B(0, r)\text{ if }|A|>r,
 \ees
 and then $f$ cannot be surjective -- so that $\deg f=0$. Similarly, we have $\deg g=0$. This is impossible since $d_1\neq d_2$, and  therefore
 \be
 \label{ib6}
 |A|\le r=C_{N, s, p}|f-g|_{\wsp}.
 \ee
 
 Combining \eqref{ib4} and \eqref{ib6} yields $1\le C_{N, s, p}|f-g|_{\wsp}$.
\end{proof}

\noindent
{\it Step 2.} Proof of the upper bound in \eqref{ve5}\\
We will construct maps $f\in {\cal E}_{d_1}$, $g\in {\cal E}_{d_2}$, constant outside some small neighborhood $B_R({\n})$  of the north pole ${\n}=(0,0,\ldots, 0, 1)$ of $\sn$, satisfying \eqref{ve5}. We will use the setting described in Section \ref{tA3}. 


We start with the case $d_1=d$, $d_2=0$. Let $h:{\mathbb S}^{N-1}\to{\mathbb S}^{N-1}$ be any  smooth map of degree $d$. [Here we use the assumption $N\ge 2$. If $N=1$, such an $h$ does not exist when $|d|\ge 2$; see the discussion in Section \ref{tA3} concerning the case $N=1$.]  Let $G:[0,R]\to\R$ be a smooth function such that 
\bes
G(r)=\begin{cases}
0,&\text{if }r\le R/4\\
\pi/2,&\text{if }R/3\le r\le 2R/3\\
0,&\text{if }3R/4\le r\le R
\end{cases}.
\ees

Let $F:[0,R]\to\R$ be defined by
\bes
F(r):=\begin{cases}
G(r),&\text{if }0\le r< R/2\\
\pi- G(r),&\text{if }R/2\le r\le R
\end{cases}.
\ees

Clearly, $F$ and $G$ satisfy assumptions \eqref{B.1} and \eqref{B.2}. 

We now define as in Section \ref{tA3}
\begin{gather*}
f(x)=(\sin F(|x|)\, h(x/|x|),\, (-)^N\,\cos F(|x|)),\\
g(x)=(\sin G(|x|)\, h(x/|x|),\, (-1)^N\, \cos G(|x|)).
\end{gather*}

From Lemma \ref{tA} we have $\deg f=d$ and $\deg g=0$. Clearly 
\bes
\sin F(r)=\sin G(r),\ \fo r\in [0,R],
\ees
and thus
\bes
f(x)-g(x)=\begin{cases}
0,&\text{if }|x|<R/2\\
(0,0,\ldots, 0, 2\, (-1)^N\cos F(|x|)),&\text{if }R/2\le |x|<R
\end{cases}.
\ees

%
%
%

In the case where $d_1=d$ and $d_2=0$, the upper bound \eqref{ve5} follows from the fact that $f-g$ does not depend on $d$.

We next turn to the general case. Consider a map $m\in C^\infty (\R^N; \sn)$ such that $m(x)=\text{\n}$ when $|x|>R/4$ and $\deg m=d_2$. Then, with $d:=d_1-d_2$ and with $f$ and $g$  as above, consider 
\bes
\widetilde f(x)=\begin{cases}
m(x),&\text{if }|x|<R/4\\
f(x),&\text{if }R/4\le |x|<R
\end{cases},\ \ \widetilde g(x)=\begin{cases}
m(x),&\text{if }|x|<R/4\\
g(x),&\text{if }R/4\le |x|<R
\end{cases}.
\ees

Then $\widetilde f\in{\cal E}_{d_1}$, $\widetilde g\in{\cal E}_{d_2}$, and $\widetilde f-\widetilde g=f-g$, whence \eqref{ve5}.
\hfill$\square$

\subsection{Proof of Theorem \ref{vo1}, item {\it 1}}
\lab{tA7}

Here $N\ge 1$. A key ingredient  is the following
 \bl
 \label{yX}
 There are two families of smooth maps $f_\ve, g_\ve : \sn\to\sn$, defined for $\ve$ small, such that
 \begin{gather}
 \label{yX1}
 f_\ve(\bfs)=g_\ve(\bfs)={\n},\ \fo \bfs\in B_{\ve/4}({\s}),\\
 \label{yX2}
 f_\ve(\bfs)={\s},\ \fo \bfs\in\sn\setminus B_{\ve^{1/2}}({\s}),\\
  \label{yX3}
 g_\ve(\bfs)={\n},\ \fo \bfs\in\sn\setminus B_{\ve^{1/2}}({\s}),\\
 \label{yX4}
 \deg f_\ve=1,\\
 \label{yX5}
 \deg g_\ve=0,\\
 \label{yX6}
 |f_\ve-g_\ve|_{W^{N/p, p}(\sn)}\to 0\text{ as }\ve\to 0,\ \fo 1<p<\infty.
 \end{gather}
 \el
%

 Granted Lemma \ref{yX} we proceed with the 
  \begin{proof}[Proof of \rth{vo1}, item 1] Assume e.g. that $d:=d_1-d_2>0$. We fix $d$ distinct  points ${\sigma_1},\ldots, \sigma_d\in\sn$.  Note that $f_\ve-{\s}$ has support in $B_{\ve^{1/2}}({\s})$. Therefore, for sufficiently small $\ve$, we may glue $d$ copies of $f_\ve$ centered at ${\sigma_1},\ldots, \sigma_d\in\sn$. We denote by $\widetilde f_\ve$ the resulting map. By construction $\widetilde f_\ve-\s$ is supported in the union of mutually disjoint balls $B_{\ve^{1/2}}(\sigma_i)$, $i=1,\ldots, d$. From \eqref{yX4} we have
  \be
  \label{yX7}
  \deg \widetilde f_\ve=d.
  \ee

 Next we consider a family of smooth maps $h_\ve:\sn\to\sn$ such that
 \be
 \label{yX8}
 \deg h_\ve=d_2
 \ee
 and
 \be
 \label{yX9}
 h_\ve({\bfs})={\n},\ \fo \bfs\in\sn\setminus B_{\ve/8}(\sigma_1).
 \ee
 [The construction of $h_\ve$ is totally standard.]
 
 We glue $h_\ve$ to $\widetilde f_\ve$ by inserting it in $B_{\ve/8}(\sigma_1)$ (here we use \eqref{yX1}). The resulting map is denoted by $\widehat f_\ve$. From \eqref{yX7} and \eqref{yX8} we have 
 \be
 \label{yX10}
 \deg\widehat f_\ve=d+d_2=d_1,
 \ee
 so that $\widehat f_\ve\in {\cal E}_{d_1}$. 
 
 We proceed similarly with $g_\ve$ using the same points $\sigma_1,\ldots, \sigma_d\in\sn$. We first obtain $\widetilde g_\ve$ such that, by \eqref{yX5}, 
 \be
 \label{yX11}
 \deg\widetilde g_\ve=0.
 \ee
 
 We then glue $h_\ve$ to $g_\ve$ as above and obtain some $\widehat g_\ve$ such that, by \eqref{yX8} and \eqref{yX11}, 
 \be
 \label{yX12}
 \deg\widehat g_\ve=0+d_2=d_2,
 \ee
 so that $\widehat g_\ve\in{\cal E}_{d_2}$. 
 
 Clearly $\widehat f_\ve-\widehat g_\ve$ consists of $d$ glued copies of $f_\ve-g_\ve$. Therefore 
 \bes
 \left|\widehat f_\ve-\widehat g_\ve \right|_{W^{N/p, p}}\le d\, |f_\ve-g_\ve|_{W^{N/p, p}}
 \ees
 and thus
 \bes
\dist_{W^{N/p,p}} ({\cal E}_{d_1}, {\cal E}_{d_2})\le \left|\widehat f_\ve-\widehat g_\ve \right|_{W^{N/p, p}}\to 0\text{ as }\ve\to 0.
 \qedhere
 \ees
 \end{proof}
 
 We now turn to the 
 \begin{proof}[Proof of Lemma \ref{yX}]
 Since the construction is localized on a small geodesic ball, we may as well work on the flat ball $B_R(0)$ centered at $0$ in $\R^N$, with $R>{\ve^{1/2}}$. 
 
 Fix a smooth nonincreasing function $K: \R\to [0,1]$ such that 
 \be
 \label{z1}
 K(t)=\begin{cases}
 1,&\text{if }t\le 1/4\\
 0,&\text{if }t\ge 3/4
 \end{cases}.
 \ee
 
 Consider the family of radial functions
 $H_\ve(x)=H_\ve(|x|):\R^N\to [0,1]$ defined by 
 \be
 \label{z2}
 H_\ve(x)=H_\ve(|x|):=\begin{cases}
 \d K\left(\frac 14-\frac 1{2\ln 2}\, \ln\left(\frac{\ln 1/|x|}{\ln 1/\ve}\right)\right),&\text{if }|x|<1\\
 0,&\text{if }|x|\ge 1
 \end{cases}.
 \ee
 
 Here, $\ve$ is a parameter such that
 \be
\label{z3}
0<\ve<1/e^2.
\ee

We also consider the radial functions 
  $F_\ve(r)$ and $G_\ve(r)$ defined by 
 \be
 \lab{tG1}
 F_\ve(r):=\begin{cases}
 \d \pi\,  (1-K(r/\ve))/2,&\text{if }r<\ve\\
 \d \pi\,  (1- H_\ve(r)/2),&\text{if }\ve\le r<R
 \end{cases}
 \ee
 and
 \be
 \lab{tG2}
 G_\ve(r):=\begin{cases}
 F_\ve(r),&\text{if }r<\ve\\
 \d\pi- F_\ve(r)= \pi\, H_\ve(r)/2,&\text{if }\ve\le r<R
 \end{cases}.
 \ee
 


 Note that $F_\ve$ and $G_\ve$ are smooth (this is clear in the regions $\{r<\ve\}$ and $\{ r>3\ve/4\}$).
 
 \begin{figure}[h]
\centering
\begin{subfigure}{.5\textwidth}
  \centering
  \includegraphics[width=0.9\linewidth]{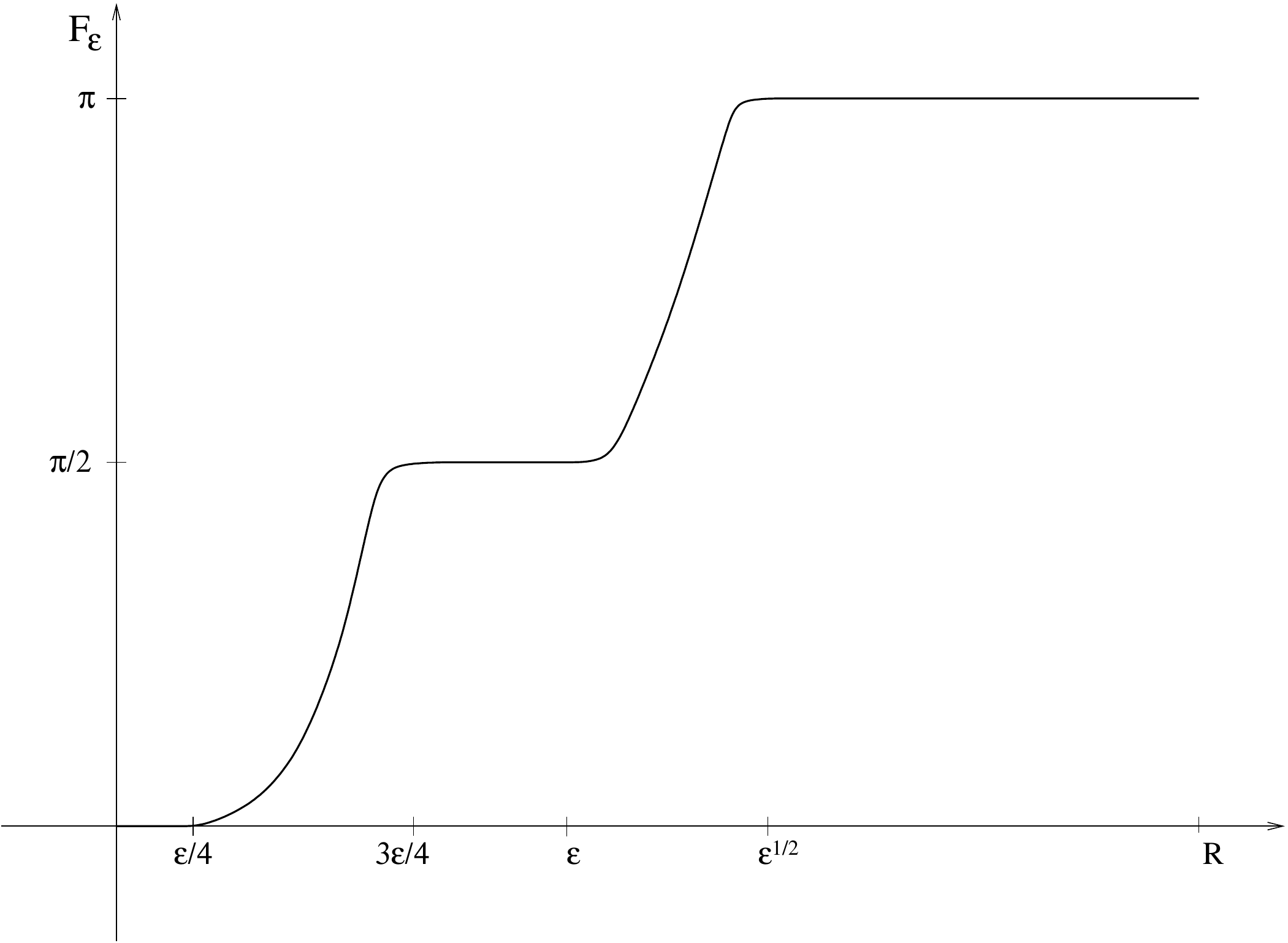}
  \label{fig:sub1}
\end{subfigure}%
\begin{subfigure}{.5\textwidth}
  \centering
  \includegraphics[width=0.9\linewidth]{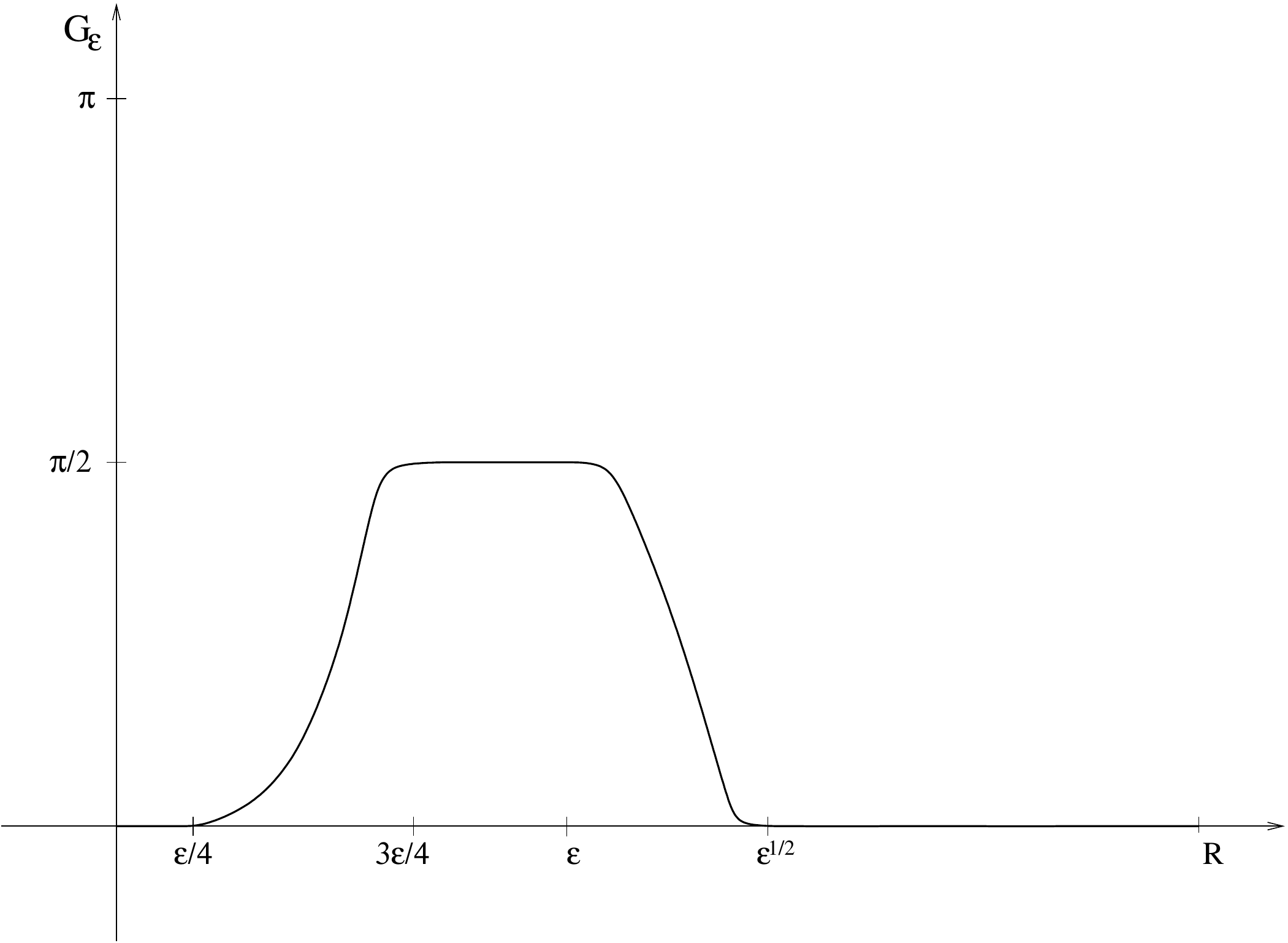}
  \label{fig:sub2}
\end{subfigure}
\caption{Plots of $F_\ve$ and $G_\ve$ given by \eqref{tG1} and \eqref{tG2}}
\label{fig:test}
\end{figure}

 As in Section \ref{tA3} set
 \begin{gather*}
 f_\ve(x)=\left(\sin F_\ve(|x|)\, \frac x{|x|},\, (-1)^N\,\cos F_\ve(|x|)\right),\ \fo x\in B_R(0),\\
  g_\ve(x)=\left(\sin G_\ve(|x|)\, \frac x{|x|},\, (-1)^N\,\cos G_\ve(|x|)\right),\ \fo x\in B_R(0).
 \end{gather*}

%
%
%
%


It is clear (using Lemma \ref{tA}) that \eqref{yX1}--\eqref{yX5} hold. Moreover,
\bes
f_\ve(x)-g_\ve(x)=\left(0,0,\ldots, 0, 2\, (-1)^{N+1}\, \cos \left(\frac \pi 2 H_\ve(|x|)\right)\right),\ \fo x\in B_R(0),
\ees
(since $H_\ve(r)=1$ when $r<\ve$ by \eqref{z2}).
%
%
Therefore
\bes
|f_\ve- g_\ve|_{W^{N/p,p}}=2\, \left| \cos \left(\frac \pi 2\, H_\ve\right) \right|_{W^{N/p, p}}.
\ees

Consider the function
\bes
\widetilde K(r)=1-\cos \left(\frac \pi 2\, K(r)\right),\ \fo r\in\R.
\ees

Clearly $\widetilde K$ satisfies \eqref{z1}. Consider the function $\widetilde H_\ve$ derived from $\widetilde K$ via \eqref{z2}, so that
\bes
\widetilde H_\ve(x)=1-\cos \left(\frac \pi 2\, H_\ve (x)\right),\ \fo x\in\R^N,
\ees
and therefore
\bes
|f_\ve-g_\ve|_{W^{N/p, p}(\R^N)}=2\left|\widetilde H_\ve \right|_{W^{N/p, p}(\R^N)}\to 0\text{ as }\ve\to 0
\ees
by \eqref{A.8} in Lemma \ref{AA} (applied to $\widetilde K$).
  \end{proof}

\subsection{Proof of Theorem \ref{vp1}, item 1 (and of Theorem \ref{vm1}, item 2)}

We rely on the following result, whose proof is postponed to  the Appendix.
\bl
\label{ie1}
Let $N\ge 1$ and $1\le p<\infty$. Fix a geodesic ball $B\subset\sn$ (of small radius). Then there exists a map $h:\sn\to\sn$ (depending on $d$) such that 
\ben
\item
$\deg h=d$.
\item
$h=(0,0,\ldots, 0,1)$ outside $B$.
\item
$|h|_{W^{N/p,p}}\le C_{N,p}|d|^{1/p}$.
\een
\el

Granted Lemma \ref{ie1}, we proceed as follows. Let $g\in {\cal E}_{d_2}$ be a smooth map such that $g$ is constant in a neighborhood of some closed ball $B$. Such maps are dense in ${\cal E}_{d_2}$, and with no loss of generality we assume that $g=(0,0,\ldots, 0,1)$ near $B$. Let $h$ be as in the above lemma, with $d:=d_1-d_2$, and set $f=\begin{cases}
g,&\text{in }\sn\setminus B\\
h,&\text{in }B
\end{cases}$. Then clearly $f\in {\cal E}_{d_1}$ and 
\be
\label{ie2}
\dist_{W^{N/p, p}}(g, {\cal E}_{d_1})\le |f-g|_{W^{N/p, p}}\le C_{N,p}|d_1-d_2|^{1/p}.
\ee

The validity of \eqref{ie2} for arbitrary $g\in {\cal E}_{d_2}$ follows by density.\hfill$\square$

\subsection{Proof of Theorem \ref{vp1}, item {\it 2} (and of Theorem \ref{vm1}, item {\it 3})}

This time the key construction is provided by the following 
\bl
\label{if1}
Let $N\ge 1$. Fix $d_1\in\Z$. Then there exists a sequence of smooth maps $f_n:\sn\to\sn$ (with sufficiently large $n$) such that:
\ben
\item
$\deg f_n=d_1$.
\item
For every geodesic ball $B\subset\sn$ of radius $1/n$, $f_n(B)=\sn$.
\een
\el

Granted Lemma \ref{if1}, we claim that the sequence $(f_n)$ satisfies
\be
\label{if2}
\dist_{\wsp}(f_n, {\cal E}_{d_2})\ge C'_{s, p, N,\alpha}n^\alpha,\ \text{with }C'_{s, p, N,\alpha}>0,
\ee
for any $0<\alpha\le 1$ such that $\wsp\hookrightarrow C^\alpha$. Clearly, the desired result follows from \eqref{if2}.

In order to prove \eqref{if2}, we argue by contradiction. Then, possibly along a subsequence still denoted $f_n$, there exist maps $g_n\in {\cal E}_{d_2}$ such that 
\be
\label{if3}
|f_n-g_n|_{C^\alpha}=o(n^\alpha)\text{ as }n\to\infty;
\ee
here, we consider the $C^\alpha$ semi-norm 
\bes
|f|_{C^{\alpha}}:=\sup\left\{ \frac{|f(x)-f(y)|}{|x-y|^\alpha};\, x, y\in\sn,\ x\neq y\right\}.
\ees
By \eqref{ib2}, for each $n$ there exists a point $\bfs=\bfs_n$ such that $g_n(\bfs)=-f_n(\bfs)$. With no loss of generality, we may assume that $f_n(\bfs)=(0,\ldots, 0,1)$ and therefore $g_n(\bfs)=(0,\ldots,0,-1)$. Let $h_n$ denote the last component of $f_n-g_n$ and let $B_n$ denote the ball of radius $1/n$ centered at $\bfs$. By \eqref{if3}, we have $h_n\ge 2-o(1)$ in $B_n$. On the other hand, Lemma \ref{if1}, item {\it 2}, implies that there exists some ${\bf t}\in B_n$ such that $f_n({\bf t})=(0,\ldots,0,-1)$. It follows that $h_n({\bf t})\le 0$. This leads to a contradiction for large $n$, and thus \eqref{if2} is proved.
\hfill$\square$

\section{Some partial results towards Open Problems \ref{sa3}, \ref{sa3'} and \ref{tD2}}
\label{sa4}

 \subsection{Full answer to Open Problem \ref{sa3'} when $N=1$ or $2$, $1\le p\le 2$, and $d_1\, d_2\ge 0$}
\lab{tB1}
 
 We start with the special cases [$N=1$, $p=2$] and [$N=2$, $p=2$]. In this cases, we are able to  determine the exact value of $\Dist_{\wsp}({\cal E}_{d_1}, {\cal E}_{d_2})$ provided $d_2>d_1\ge 0$ (Propositions \ref{sf1a}, \ref{sf1b} and their consequences in Proposition \ref{sf1c}). This allows us to  give a positive answer to Open Problem \ref{sa3'} when $N=2$ and $1\le p\le 2$ under the extra assumption that $d_1\, d_2\ge 0$ (Corollary \ref{sf1}).
 
 \bpr
 \lab{sf1a}
 Assume that $N=1$ and $d_2>d_1\ge 0$. Let $f(z)=z^{d_1}$, $z\in\so$. Then 
\be
\lab{tI1}
|f-g|_{H^{1/2}}^2\ge 4\pi^2\, (d_2-d_1),\ \fo g\in {\cal E}_{d_2}.
\ee
 \epr
 \begin{proof}
 We will use the Fourier decomposition of $g\in H^{1/2}(\so ; \so)$, given by $g(e^{\im\theta})=\sum_{n=-\infty}^\infty a_n\, e^{\im\, n\theta}$. Recall (see e.g. \cite{B1}) 
 that the Gagliardo semi-norm \eqref{uc8} has a simple form
 \be
 \label{sg1}
 |g|_{H^{1/2}}^2=4\pi^2\, \sum_{n=-\infty}^\infty |n|\, |a_n|^2
 \ee
 and that for every $g\in H^{1/2}(\so ; \so)$,
 \be
 \label{sg2}
 \deg g=\sum_{n=-\infty}^\infty n\, |a_n|^2,
 \ee
 \be
 \label{sg3}
 \sum_{n=-\infty}^\infty |a_n|^2=1.
 \ee
 

By \eqref{sg1} we have
 \bes
 \begin{aligned}
 \frac 1{4\pi^2}\, |f-g|_{H^{1/2}}^2&= \sum_{\substack{n\in\Z \\ n\neq d_1}} |n|\, |a_n|^2 +d_1\, |a_{d_1}-1|^2=\sum_{n\in\Z} |n|\, |a_n|^2 +d_1\, (|a_{d_1}-1|^2-|a_{d_1}|^2)\\
 &=\sum_{n\in\Z} |n|\, |a_n|^2 +d_1\, (1- 2\,\text{Re}\, a_{d_1})\ge d_2-d_1,
 \end{aligned}
 \ees
 by \eqref{sg2} and \eqref{sg3}.
 \end{proof}
 \bpr
 \lab{sf1b}
 Assume that $N=2$ and $d_2>d_1\ge 0$. Let $f\in {\cal E}_{d_1}$ be defined by $f(\bfs)= {\cal T}^{-1}\Big(\big({\cal
  T}({\bf s})\big)^{d_1}\Big)$ where ${\cal T}:\st\to\C$ is the stereographic
  projection. Then
  \be
  \lab{tI2}
  |f-g|_{H^1}^2\ge 8\pi\, (d_2-d_1),\ \fo g\in {\cal E}_{d_2}.
  \ee
 \epr
 \begin{proof}
 Recall that $f$ is a harmonic map and that 
  \be
  \label{sg5}
  \int_{\st} |\na f|^2=8\pi\, d_1;
  \ee
  see e.g. \cite{B2} and the references therein. For any $g\in {\cal E}_{d_2}$, write 
  \bes
  \begin{aligned}
  |f-g|_{H^1}^2&=\int_{\st}|\na (f-g)|^2=\int_{\st}|\nabla f|^2-2\int_{\st}
    |\nabla g|^2(g\cdot f)+\int_{\st}|\nabla g|^2\\
    &\geq\int_{\st}|\nabla g|^2-\int_{\st}|\nabla f|^2=\int_{\st}|\nabla g|^2-8\pi\, d_1\geq 8\pi\, ( d_2-d_1), 
 \end{aligned}
  \ees
 by \eqref{sg5} and Kronecker's formula \eqref{wa1}. 
 \end{proof}
 \bpr
 \lab{sf1c}
 Let $d_1, d_2\in\Z$ be such that $d_2>d_1\ge 0$. 
 \ben
 \item
 When $N=1$ we have
 \be
 \lab{tI3}
 \Dist_{H^{1/2}}({\cal E}_{d_1}, {\cal E}_{d_2})=(4\pi^2\, |d_1-d_2|)^{1/2}.
 \ee
 \item 
 When $N=2$ we have
 \be
 \lab{tI4}
 \Dist_{H^{1}}({\cal E}_{d_1}, {\cal E}_{d_2})=(8\pi\, |d_1-d_2|)^{1/2}.
 \ee
 \een 
 \epr
 \begin{proof}
 Formula \eqref{tI4} follows from \eqref{wa2} and \eqref{tI2}.
 
 On the other hand, \eqref{tI3} is a consequence of \eqref{tI1} and of the following one dimensional version of \eqref{wa2}: 
 \be
 \label{gc1}
 \text{Given }\ve>0\text{ and }f\in {\cal E}_{d_1}\text{there exists some }g\in {\cal E}_{d_2}\text{such that }|f-g|_{H^{1/2}}^2\le 4\pi^2\, |d_1-d_2|+\ve.
 \ee
 
 Indeed, let $0<\delta<1$ and set $h_\delta(z):=\d\left(\frac{z-(1-\delta)}{(1-\delta)\, z-1}\right)^{-d}$, with $d:=d_1-d_2$.  Then  $h_\delta\in {\cal E}_{-d}$, and thus $g_\delta:=f\, h_\delta\in {\cal E}_{d_2}$. On the other hand, we clearly have  $h_\delta\to 1$ a.e. as $\delta\to 0$.
 We claim that 
 \be
 \label{gg1}
 |g_\delta-f|_{H^{1/2}}^2=|h_\delta|_{H^{1/2}}^2+o(1)\text{ as }\delta\to 0.
 \ee
 
 Indeed, we start from the identity
 \bes
 (g_\delta -f)(x)-(g_\delta -f)(y)=(h_\delta (x)-1)\, (f(x)-f(y))+(h_\delta(x)-h_\delta(y))\, f(y),
 \ees
 which leads to  the inequalities
 \be
 \label{gg2}
 |(g_\delta-f)(x)- (g_\delta-f)(y)|\ge |h_\delta(x)-h_\delta (y)|-|h_\delta(x)-1|\, |f(x)-f(y)|
 \ee
 and
 \be
 \label{gg3}
 |(g_\delta-f)(x)- (g_\delta-f)(y)|\le |h_\delta(x)-h_\delta (y)|+|h_\delta(x)-1|\, |f(x)-f(y)|.
 \ee
 
 By dominated convergence, we have
 \be
 \label{gg4}
 \int_{\so}\int_{\so}\frac {|h_\delta(x)-1|^2\, |f(x)-f(y)|^2}{|x-y|^2}=o(1)\text{ as }\delta\to 0.
 \ee
 
 Formula \eqref{gg1} is a consequence of \eqref{gg2}--\eqref{gg4}.
 
 Finally, \eqref{gc1} follows from \eqref{gg1} and the fact that  $|h_\delta|^2_{H^{1/2}}=4\pi^2\, |d|$ \cite[Corollary 3.2]{bmrs}.  
 \end{proof}
 \bc
 \label{sf1}
 Assume that $N=1$ or $2$, $1\le p\le 2$ and $d_1\, d_2\ge 0$. Then 
 \be
 \label{sf2}
 H-\dist_{W^{N/p, p}}({\cal E}_{d_1}, {\cal E}_{d_2})\ge C'_{p, N}\, |d_1-d_2|^{1/p}
 \ee
 for some constant $C'_{p, N}>0$.
 \ec
 
 \begin{proof}
%

We may assume that $d_2>d_1\ge 0$, and under this assumption we will prove that  
\be
 \label{sf2*}
 \Dist_{W^{N/p, p}}({\cal E}_{d_1}, {\cal E}_{d_2})\ge C'_{p, N}\, |d_1-d_2|^{1/p}.
 \ee
 
 The case $N=1$, $p=1$ follows from Theorem \ref{vm1}, item {\it 1}. 
 
 The case where $N=1$, $1<p<2$ follows from \eqref{tI1} and the trivial inequality
 \bes
 |f|_{H^{1/2}}^2\le |f|_{W^{1/p, p}}^p\, (2\|f\|_{L^\infty})^{2-p},\ \fo 1<p<2,\ \fo f.
 \ees

 The case where $N=2$ and $1\le p<2$ follows from \eqref{tI2} and the Gagliardo-Nirenberg inequality
 \bes
 |f|_{H^1}^2\le C_{p, N}\, |f|_{W^{2/p,p}}^p\, \|f\|_{L^\infty}^{2-p},\ \fo f.\qedhere
 \ees
 \end{proof}

 \subsection{Full answer to Open Problem \ref{sa3} when $1\le p\le N+1$ and $d_1\, d_2\le 0$}
 \lab{tB2}
 
 In this section we prove that the answer to Open Problem \ref{sa3} is positive when $N\ge 1$, $1\le p\le N+1$ and $d_1\, d_2\le 0$ (Proposition \ref{tN1}). This implies that the answer to Open Problem \ref{sa3'} is positive when $N=1$ or $2$  and $1\le p\le 2$ (Corollary \ref{ha8}).  We end with a review of some simple cases of special interest which are still open (see Remark \ref{tB5}). 
 
 \bpr
 \lab{tN1}
Let $N\ge 1$ and  $1\le p\le N+1$. Let  $d_1, d_2\in\Z$ be such that $d_1\, d_2\le 0$. We have 
 \be
 \lab{tN2}
 \Dist_{W^{N/p, p}}({\cal E}_{d_1}, {\cal E}_{d_2})\ge C'_{p, N}\, |d_1-d_2|^{1/p}.
 \ee
 \epr
  
  \begin{proof}

We rely on the following estimate, valid when $1\le p\le N+1$ :
\be
 \lab{tN3}
 |\deg f-\deg g|\le C_{p, N} |f-g|_{W^{N/p,p}}^{p/(N+1)}\left (|f|_{W^{N/p,p}}^{N p/(N+1)}+|g|_{W^{N/p,p}}^{N p/(N+1)}\right),\ \fo f,\, g\in W^{N/p,p}(\sn ; \sn),
 \ee
(see Proposition \ref{tD50} below). 

Fix a canonical $f_1\in {\cal E}_{d_1}$ (for example $f_1(z)=z^{d_1}$ when $N=1$ or the map given by Lemma \ref{ie1} for $N\ge 1$).

This $f_1$ satisfies
\be
\label{tN6}
|f_1|_{W^{N/p, p}}\le C_{p, N}\, |d_1|^{1/p}.
\ee

Therefore, with different constants $C_{p,N}$ depending on $p$ and $N$, but not on $d_1$ or $d_2$, we have

\be
\lab{tN5}
\begin{aligned}
|d_1-d_2|\le & C_{p,N}\, |f_1-g|_{W^{N/p,p}}^{p/(N+1)}\, \left( |d_1|^{N/(N+1)}+|g|_{W^{N/p, p}}^{Np/(N+1)}\right)\\
\le & C_{p, N}\, |f_1-g|_{W^{N/p,p}}^{p/(N+1)}\, \left( |d_1|^{N/(N+1)}+|f_1|_{W^{N/p, p}}^{Np/(N+1)}+|f_1-g|_{W^{N/p, p}}^{Np/(N+1)}\right)\\
\le & C_{p,N}\, |f_1-g|_{W^{N/p,p}}^{p/(N+1)}\, \left( |d_1|^{N/(N+1)}+|f_1-g|_{W^{N/p, p}}^{Np/(N+1)}\right), \ \fo g\in {\cal E}_{d_2}.
\end{aligned}
\ee

Using \eqref{tN5} and the fact that $|d_1|\le |d_1-d_2|$ (since $d_1\, d_2\le 0$), we find that
\bes
|f_1-g|_{W^{N/p,p}}\ge C'_{p, N}\, |d_1-d_2|^{1/p},\ \fo g\in {\cal E}_{d_2},
\ees
whence \eqref{tN2}.
  \end{proof}

Corollary \ref{sf1} and Proposition \ref{tN1} lead to the following
 \bc
 \label{ha8}
 Assume that $N=1$ or $2$ and $1\le p\le 2$.  Then 
 \bes
 H-\dist_{W^{1/p, p}}({\cal E}_{d_1}, {\cal E}_{d_2})\ge C'_{p}\, |d_1-d_2|^{1/p},\ \fo d_1, d_2\in\Z,
 \ees
 for some constant $C'_{p}>0$.
 \ec
 
%
%
%
%

\br
\lab{tB5}
We mention here a few cases of special interest not covered by the results in Section \ref{tB1} and \ref{tB2}.
\ben
\item
In view of  Propositions \ref{sf1c}, item {\it 1},  and  Proposition  \ref{tN1}, we know that when $N=1$ and $p=2$ we have
\be
\lab{tC1}
\Dist_{H^{1/2}}({\cal E}_{d_1}, {\cal E}_{d_2})\ge C'\, |d_1-d_2|^{1/2},\text{ if either }0\le d_1<d_2 \text{ or }d_1\, d_2< 0.
\ee

We do not know whether \eqref{tC1} holds in the  case where $0<d_2<d_1$.
\item
Let $N=2$ and $p=2$. We do not know whether the inequality
\be
\label{tC2}
\Dist_{H^1}({\cal E}_{d_1}, {\cal E}_{d_2})\ge C'\, |d_1-d_2|^{1/2}
\ee
(valid  when $0\le d_1<d_2$ or $d_2\, d_1< 0$ by Proposition \ref{sf1c}, item {\it 2}, and Proposition \ref{tN1}),  still holds in the remaining  cases. A more precise question is whether  \eqref{tC2} holds with  $C'=(8\pi)^{1/2}$. 
\een
\er

\subsection{A very partial answer in the general case}
\lab{tB3}

 \bpr
 \label{sb1}
 Let $N\ge 1$ and $1\le p<\infty$. Then for every $d_1\in\Z$ there exists some $C'_{p, d_1}$ such that 
 \be
 \label{sb2}
 \Dist_{W^{N/p,p}}({\cal E}_{d_1}, {\cal E}_{d_2})\ge C'_{p, d_1}\, |d_1-d_2|^{1/p},\ \fo d_2\in\Z.
 \ee
 \epr
 \begin{proof}
 {} ${}$
 
 \smallskip
 \noindent
  {\it Step 1.} Proof of \eqref{sb2} when $d_1=0$\\
 Since any constant map belongs to ${\cal E}_0$ it suffices to show that
 \be
 \label{sb3}
 \inf_{g\in{\cal E}_{d_2}}|g|_{W^{N/p,p}}\ge C'_p|d_2|^{1/p},\ \fo d_2\in\Z.
 \ee
 
 When $p>N$ we rely on \cite[Theorem 0.6]{lddjr}. The case $p=N$ follows from Kronecker's formula \eqref{wa1}, which leads to
 \be
 \label{sb4}
 C'_N\, |d_2|^{1/N}\le |g|_{W^{1,N}},\ \fo g\in {\cal E}_{d_2}.
 \ee 
 
 The case $1\le p<N$ is a consequence of \eqref{sb4} and of the Gagliardo-Nirenberg inequality 
\bes
|g|_{W^{1,N}}\le C\, |g|_{W^{N/p, p}}^{p/N}\, \|g\|_{L^\infty}^{1-p/N}=C\, |g|_{W^{N/p, p}}^{p/N},\ \fo g\in W^{N/p, p}(\sn ; \sn).
\ees

\smallskip
\noindent
{\it Step 2.} Proof of \eqref{sb2} when $d_1\neq 0$\\
As in the proof of Proposition \ref{tN1}, we fix a canonical $f_1\in {\cal E}_{d_1}$ satisfying \eqref{tN6}.
%

Next we claim that for every $d_2\in\Z$, $d_2\neq d_1$,
\be
\label{sc2}
\inf_{g\in {\cal E}_{d_2}}\, |f_1-g|_{W^{N/p, p}}=\alpha (d_1, d_2)>0.
\ee

Indeed, we know from Theorem \ref{ih3} that 
\be
\lab{tF1}
\inf_{g\in {\cal E}_{d_2}}\, |f_1-g|_{W^{N/p, p}}=\alpha( f_1, d_2)>0.
\ee

But since $f_1$ is a canonical map in ${\cal E}_{d_1}$ we obtain \eqref{sc2}.

Write, with $g\in {\cal E}_{d_2}$,
\be
\label{sc3}
|f_1-g|_{W^{N/p, p}}\ge |g|_{W^{N/p, p}}-|f_1|_{W^{N/p,p}}\ge C'_p\, |d_2|^{1/p}-C_p\, |d_1|^{1/p},
\ee
by \eqref{sb3} and \eqref{tN6}. Clearly
\be
\label{sc4}
C'_p\, |d_2|^{1/p}-C_p\, |d_1|^{1/p}\ge \frac 12\, C'_p\, |d_2-d_1|^{1/p}
\ee
provided $|d_2|$ is sufficiently large, say $|d_2|\ge C (p, d_1)$. Finally we apply \eqref{sc2} for all values of $d_2$, $|d_2|<C(p, d_1)$, $d_2\neq d_1$, and we obtain
\be
\label{sc5}
\inf_{g\in {\cal E}_{d_2}}\, |f_1-g|_{W^{N/p, p}}\ge D_{p, d_1}\, |d_2-d_1|^{1/p}
\ee
with $D_{p, d_1}>0$, for every $d_2\in\Z$, $|d_2|< C(p, d_1)$. Combining \eqref{sc3}--\eqref{sc5} yields
\bes
\inf_{g\in {\cal E}_{d_2}}\, |f_1-g|_{W^{N/p, p}}\ge C'_{p, d_1}\, |d_1-d_2|^{1/p},\ \fo d_2\in\Z,
\ees
with $C'_{p, d_1}:=\min\{ (1/2)\, C'_p, D_{p, d_1} \}>0$.
\end{proof}

%
%
%
%
%

\subsection{A partial solution to Open Problem \ref{tD2}}
\lab{tD4}
 \bpr
 \lab{tD50}
 Assume that $N\ge 1$ and $1\le p\le N+1$. Then 
 \be
 \lab{tD5}
 |\deg f-\deg g|\le C_{p, N} |f-g|_{W^{N/p,p}}^{p/(N+1)}\left (|f|_{W^{N/p,p}}^{N p/(N+1)}+|g|_{W^{N/p,p}}^{N p/(N+1)}\right),\ \fo f,\, g\in W^{N/p,p}(\sn ; \sn).
 \ee
 \epr
 
 Note that Proposition \ref{tD50} provides a positive answer to Open Problem \ref{tD2} when $N\ge 1$ and $1\le p\le N+1$.
 
 \begin{proof}
 Assuming the case $p=N+1$ proved, the other cases follow via Gagliardo-Nirenberg, with the exception of the case $N=1$, $p=1$. However, in that special case estimate \eqref{tD5} follows from  Theorem \ref{bgl20}. 
 We may thus assume that $p=N+1$. 
 
 Let $F$, $G$ denote respectively the harmonic extension of $f$, $g$ to the unit ball $B$ of $\R^{N+1}$. Then $F, G\in W^{1,N+1}(B ; \R^{N+1})$ and (see e.g. \cite{lddjr})
 \be
 \lab{tE1}
 \deg f=\fint_B \Jac F,\ \deg g=\fint_B \Jac G.
 \ee
 
 Since for any square matrices $A$, $B$ of size $N+1$ we have
 \be
 \lab{tE2}
 |\det A-\det B|\le C\sum_{j=1}^{N+1}\|\text{col}^j(A)-\text{col}^j(B)\|\, \left(\|A\|^{N}+\|B\|^N\right),
 \ee
 we find from \eqref{tE1} and \eqref{tE2} that 
 \be
 \lab{tE3}
 |\deg f-\deg g|\le C\, |F-G|_{W^{1,N+1}}\, (|F|_{W^{1,N+1}}^N+|G|_{W^{1,N+1}}^N).
 \ee
 Finally, we obtain  \eqref{tD5}  from \eqref{tE3} and the estimates
 \bes
 |F|_{W^{1,N+1}}\le C\, |f|_{W^{N/(N+1), N+1}}\text{ and }|G|_{W^{1,N+1}}\le C\, |g|_{W^{N/(N+1), N+1}}.\qedhere
 \ees
 \end{proof}
 

 \section*{Appendix. Proofs of some auxiliary results}
 \setcounter{equation}{0}
 \renewcommand{\theequation}{\Alph{section}.\arabic{equation}}
 \setcounter{section}{1}
 \setcounter{proposition}{0}
 \renewcommand{\thelemma}{A}

 \label{vga}
 \addcontentsline{toc}{section}{Appendix. Proofs of some auxiliary results}
 
Let $K:\R\to [0,1]$ be a smooth non increasing function such that 
 \bes
 K(t)=\begin{cases}
 1,&\text{if }t\le 1/4\\
 0,&\text{if }t\ge 3/4
 \end{cases}.
 \ees
 
 Consider
  the family of radial functions $H_\ve(x)=H_\ve(|x|):\R^N\to [0,1]$, $N\ge 1$, defined by 
 \bes
 H_\ve(x)=H_\ve(|x|):=\begin{cases}
 \d K\left(\frac 14-\frac 1{2\ln 2}\, \ln\left(\frac{\ln 1/|x|}{\ln 1/\ve}\right)\right),&\text{if }|x|<1\\
 0,&\text{if }|x|\ge 1
 \end{cases},
 \ees
 and  $\ve$ is a parameter such that
$
0<\ve<1/e^2$.

The following lemma collects some useful properties of $H_\ve$.

\begin{appl}
\lab{AA}
The functions $H_\ve$ satisfy
\begin{gather}
\lab{A.4}
H_\ve\text{ is smooth on }\R^N,\ \fo\ve.\\
\lab{A.5}
H_\ve(r)=1,\ \fo 0\le r\le\ve,\ \fo \ve.\\
\lab{A.6}
H_\ve(r)=0,\ \fo r\ge \ve^{1/2},\ \fo \ve.\\
\lab{A.7}
H_\ve(r)\text{ is non increasing on }(0,\infty).\\
\lab{A.8}
\text{for every }1<p<\infty,\ \ \|H_\ve(x)\|_{W^{N/p, p}(\R^N)}\to 0\text{ as }\ve\to 0.\\
\lab{A.9}
\text{for every }1<p<\infty\text{ and every }j=1, 2,\ldots, N,\  \|x_j\, H_\ve(x)\|_{W^{1+N/p, p}(\R^N)}\to 0\text{ as }\ve\to 0.
\end{gather}
\end{appl}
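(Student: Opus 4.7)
The plan is as follows. Properties \eqref{A.4}--\eqref{A.7} are immediate consequences of the definition. The function $\rho(r) := \tfrac14 - \tfrac{1}{2\ln 2} \ln(\ln(1/r)/\ln(1/\ve))$ is smooth and strictly decreasing on $(0,1)$; direct evaluation gives $\rho(\ve) = 1/4$ and $\rho(\ve^{1/2}) = 3/4$. Combined with the fact that $K$ is smooth, non-increasing, equal to $1$ on $(-\infty,1/4]$ and to $0$ on $[3/4,\infty)$, this yields at once the boundary values \eqref{A.5}--\eqref{A.6}, the smoothness \eqref{A.4} (using also that $H_\ve$ is constant near $r=0$ and for $r\ge \ve^{1/2}$), and the monotonicity \eqref{A.7}.

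For \eqref{A.8} I would first treat the integer critical case $s=1$, $p=N$ (which demands $N\ge 2$). The chain rule yields the pointwise bound $|H_\ve'(r)| \le C/(r\ln(1/r))$ on the transition annulus $\ve < r < \ve^{1/2}$, and hence
\begin{equation*}
  \int_{\R^N} |\nabla H_\ve|^N \, dx \le C \int_\ve^{\ve^{1/2}} \frac{dr}{r(\ln(1/r))^N} \le \frac{C}{(\ln(1/\ve))^{N-1}} \to 0 \text{ as } \ve\to 0,
\end{equation*}
while $\|H_\ve\|_{L^N}$ is trivially $O(\ve^{1/2})$. Invoking the Gagliardo-Nirenberg inequality \eqref{zb1} along the critical curve, with base pair $(s_0,p_0) = (1,N)$ and $\theta = N/p \in (0,1)$, then yields $|H_\ve|_{W^{N/p,p}} \to 0$ for every $p > N$.

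The main obstacle is the remaining range $1 < p \le N$ (and in particular the case $N=1$), where the above interpolation is no longer available. My plan here is to argue directly on the Gagliardo double integral defining $[H_\ve]_{W^{N/p,p}}$, splitting according to whether both points lie in the transition annulus or not. The refined bound $|H_\ve(x) - H_\ve(y)| \le C\,|\ln\ln(1/|x|) - \ln\ln(1/|y|)|$ on that annulus, together with the change of variable $u = \ln\ln(1/|x|)$ which turns the log-log scale into a fixed scale of length $\ln 2$, should extract a decaying factor of order $(\ln(1/\ve))^{1-p}$. Equivalently, one may invoke the critical-exponent $(N/p,p)$-capacity of the annulus $\{\ve<|x|<\ve^{1/2}\}$, which is known to be of order $(\ln(1/\ve))^{1-p}$ when $sp = N$; either route establishes \eqref{A.8} for every $1<p<\infty$.

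Finally, \eqref{A.9} follows from \eqref{A.8} and the product rule $\partial_k(x_j H_\ve) = \delta_{jk} H_\ve + x_j \partial_k H_\ve$. The first summand tends to $0$ in $W^{N/p,p}$ by \eqref{A.8}. For the second, the pointwise bound $|x_j \partial_k H_\ve(x)| \le |x|\cdot |H_\ve'(|x|)| \le C/\ln(1/\ve)$ on the transition annulus, together with the fact that $x_j \partial_k H_\ve$ shares the same $\log\log$ structure as $H_\ve$, allows one to repeat the estimates used for \eqref{A.8} with an extra small multiplicative factor, concluding that $\|x_j\partial_k H_\ve\|_{W^{N/p,p}} \to 0$, and hence $\|x_j H_\ve\|_{W^{1+N/p,p}} \to 0$.
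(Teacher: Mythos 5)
Your treatment of \eqref{A.4}--\eqref{A.7}, of \eqref{A.8} for $N=1$ and for $p\ge N$, and the structure of your argument for \eqref{A.9} all follow essentially the paper's own route: the boundary values $\rho(\ve)=1/4$ and $\rho(\ve^{1/2})=3/4$, the estimate $\|\nabla H_\ve\|_{L^N}^N\lesssim (\ln(1/\ve))^{1-N}$ followed by Gagliardo--Nirenberg for $p>N$, the Lipschitz-composition bound $|H_\ve(x)-H_\ve(y)|\le C\,|\ln\ln(1/|x|)-\ln\ln(1/|y|)|$ when $N=1$, and the extra factor $\|x_j\nabla H_\ve\|_{L^\infty}\le C/\ln(1/\ve)$ for \eqref{A.9} are exactly the ingredients used there. (A minor slip: $\rho$ is \emph{increasing} in $r$, not decreasing --- your own values $\rho(\ve)=1/4<3/4=\rho(\ve^{1/2})$ show this --- and it is precisely this, combined with $K$ non-increasing, that yields \eqref{A.7}.)

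The genuine gap is the range $N\ge 2$, $1<p<N$. There $N/p>1$, so the seminorm $|\cdot|_{W^{N/p,p}}$ is \emph{not} a single Gagliardo double integral: it involves derivatives of $H_\ve$ up to order $\lfloor N/p\rfloor\ge 1$, plus a fractional Gagliardo seminorm of those top-order derivatives, and your proposed ``direct argument on the Gagliardo double integral'' has no object to act on. The fallback you offer --- quoting the asymptotics of the critical $(N/p,p)$-capacity of the annulus --- is not a proof either: capacity is an infimum over admissible functions, and it does not follow that this \emph{particular} $H_\ve$ (which is the one used elsewhere in the paper) nearly realizes it; moreover the standard proofs of such capacity asymptotics are exactly the estimate being asked for here. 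The paper closes this range differently: from the pointwise bounds \eqref{A.27}, namely $|\partial^\gamma H_\ve(x)|\le C_k(\ln(1/\ve))^{-1}|x|^{-k}$ on $M_\ve=\{\ve<|x|<\ve^{1/2}\}$ for $|\gamma|=k\ge 1$, it deduces the \emph{uniform} higher-order bound $\|H_\ve\|_{W^{N,1}}\le C$ (see \eqref{A.26}) alongside the decaying bound \eqref{A.25} in $W^{1,N}$, and then interpolates by Gagliardo--Nirenberg between these two endpoints; every $W^{N/p,p}$ with $1<p<N$ lies on the corresponding interpolation segment, so \eqref{A.8} follows, and \eqref{A.9} is obtained in the same way with the additional small factor $C/\ln(1/\ve)$. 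To complete your argument you would need to add the control of the derivatives of $H_\ve$ up to order $N$ (or some equivalent higher-order bound) and replace the double-integral step by this interpolation.
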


Lemma \ref{AA} implies in particular that the $\wsp$-capacity of a point in $\R^N$ is zero when $sp\le N$ and $1<p<\infty$. The above construction is inspired by some standard techniques related to capacity estimates. 
 
 \begin{proof}
 Properties \eqref{A.5}--\eqref{A.7} are obvious. The smoothness of $H_\ve$ is clear (from its definition) in the region $\{|x|<1\}$. It is even clearer from \eqref{A.6} in the region $\{ |x|>\ve^{1/2}\}$ and thus $H_\ve$ is smooth on $\R^N$ since $\ve^{1/2}<1$. 
 
 Consider the function $f:\R^N\to [0,\infty]$ defined by
 \be
 \lab{A.10}
 f(x)=\begin{cases}
 \ln (\ln 1/|x|),&\text{if }|x|<1/e\\
 0,&\text{if }|x|\ge 1/e
 \end{cases}.
 \ee
 
 We claim that 
 \be
 \lab{A.11}
 H_\ve(x)=K(\alpha\, f(x)+\beta_\ve),\ \fo x\in\R^N,
 \ee
 where
 \bes
 \alpha=-\frac 1{2\, \ln 2}\text{ and }\beta_\ve=\frac 14+\frac 1{2\, \ln 2}\, \ln (\ln 1/\ve).
 \ees
 
 Indeed, \eqref{A.11} is clear when $|x|<1/e$. In the region $|x|\ge 1/e$ we have $H_\ve(x)=0$ by \eqref{A.6} (since $1/e\ge \ve^{1/2}$); on the other hand for such $x$ we have $K(\alpha f(x)+\beta_\ve)=0$ since $\beta_\ve\ge 3/4$ (again thanks to the property $1/e\ge \ve^{1/2}$). 
 
 For the proofs of \eqref{A.8} and \eqref{A.9} it is convenient to distinguish the cases $N=1$ and $N\ge 2$.
 
 \smallskip
 \noindent
 {\it Case 1: $N=1$}. We must show that
 \be
 \lab{A.12}
 |H_\ve (x)|_{W^{1/p, p}}(\R)\to 0\text{ as }\ve\to 0
 \ee
 and
 \be
 \lab{A.13}
 |x\, H_\ve(x)|_{W^{1+1/p, p}(\R)}\to 0\text{ as }\ve\to 0.
 \ee
 
 We claim that 
 \be
 \lab{A.14}
 f\in W^{1/p, p}(\R),\ \fo 1<p<\infty.
 \ee
 
 Clearly, it suffices to establish that
 \be
 \label{aad2}
 \iint_{0<y<x<e^{-1}}\frac{|f(x)-f(y)|^p}{(x-y)^2}\, dxdy<\infty,\ \fo 1<p<\infty.
 \ee
 
 With the change of variables $x=e^{-s}$, $y=e^{-s-t}$, $s>1$, $t>0$, inequality \eqref{aad2} amounts to
\be
\label{aad3}
\int_0^\infty\int_1^\infty\frac{[\ln (1+t/s)]^p}{(e^{-s}-e^{-s-t})^2}\, e^{-2s-t}\, dtds=\int_0^\infty\int_1^\infty\frac{[\ln (1+t/s)]^p}{(e^{t/2}-e^{-t/2})^2}\, dtds<\infty.
\ee

In order to prove \eqref{aad3}, we invoke the inequality $\ln (1+t/s)\leq t/s$ and the convergence of the integrals $\d\int_0^\infty \frac{t^p}{(e^{t/2}-e^{-t/2})^2}\, dt$, respectively $\d\int_1^\infty \frac 1{s^p}\, ds$.

Next, we deduce from \eqref{A.11} that 
\be
\lab{A.17}
\frac{|H_\ve(x)-H_\ve(y)|^p}{|x-y|^2}\le C\, \frac{|f(x)-f(y)|^p}{|x-y|^2},\ \fo x, y\in\R.
\ee

Dominated convergence, \eqref{A.17} and \eqref{A.6} imply that 
\bes
|H_\ve|_{W^{1/p, p}(\R)}=\int_\R\int_\R \frac{|H_\ve(x)-H_\ve(y)|^p}{|x-y|^2}\, dxdy\to 0\text{ as }\ve\to 0.
\ees

In view of \eqref{A.12}, property \eqref{A.13} amounts to
\be
\lab{A.18}
|x\, H_\ve'(x)|_{W^{1/p, p}(\R)}\to 0\text{ as }\ve\to 0.
\ee

Clearly
\be
\lab{A.19}
x\, H_\ve'(x)=|\alpha|\, \frac {K'(\alpha f(x)+\beta_\ve)}{\ln 1/|x|},\ \fo x\in\R,
\ee
and thus
\be
\lab{X.20}
x\, H_\ve'(x)=|\alpha|\, \frac {K'(\alpha f(x)+\beta_\ve)}{e^{f(x)}},\ \fo x\in\R
\ee
(note that $x\, H_\ve'(x)=0$ in the region $|x|\ge 1/e$, while $f(x)=\ln (\ln 1/|x|)$ in the region $|x|<1/e$). 

Hence we may write 
\be
\lab{A.21}
x\, H_\ve'(x)=Q_\ve (\alpha f(x)+\beta_\ve),\ \fo x\in\R,
\ee
where
\be
\lab{A.22}
Q_\ve(t)=|\alpha|\, \frac{K'(t)}{e^{(t-\beta_\ve)/\alpha}}=\frac C{\ln 1/\ve}\, \frac{K'(t)}{e^{t/\alpha}},\ \fo t\in\R,
\ee
and $C$ is a universal constant. Clearly $K'(t)\, e^{-t/\alpha}$ belongs to $C^\infty_c(\R)$ and thus is Lipschitz. We deduce from \eqref{A.14},\eqref{A.21} and \eqref{A.22} that
\bes
|x\, H_\ve'(x)|_{W^{1/p, p}(\R)}\le \frac C{\ln 1/\ve}\, |f|_{W^{1/p, p}(\R)}\to 0\text{ as }\ve\to 0.
\ees

\smallskip
\noindent
{\it Case 2: $N\ge 2$}. We must show that for every $1<p<\infty$,
\be
\lab{A.23}
\| H_\ve (x)\|_{W^{N/p, p}(\R^N)}\to 0\text{ as }\ve\to 0
\ee
and
\be
\lab{A.24}
\|x_j\, \nabla H_\ve(x)\|_{W^{N/p, p}(\R^N)}\to 0\text{ as }\ve\to 0.
\ee

We claim that 
\be
\lab{A.25}
\|H_\ve\|_{W^{1, N}(\R^N)}\le \frac C{(\ln 1/\ve)^{(N-1)/N}}\to 0\text{ as }\ve\to 0
\ee
and
\be
\lab{A.26}
\|H_\ve\|_{W^{N,1}(\R^N)}\le C\text{ as }\ve\to 0.
\ee

Assertion \eqref{A.23} with $p>N$ (respectively $p<N$) follows from Gagliardo-Nirenberg, \eqref{A.25} and $\|H_\ve\|_{L^\infty}=1$ (respectively Gagliardo-Nirenberg, \eqref{A.25} and \eqref{A.26}).

For the verification of \eqref{A.25} and \eqref{A.26} note that 
\be
\lab{A.27}
|\partial^\gamma H_\ve (x)|\le \frac{C_k}{\ln 1/\ve}\, \frac 1{|x|^k}\, \one_{M_\ve}(x),\ \fo x\in\R^N,
\ee
for every multi-index $\gamma$ of length $k:=|\gamma|\ge 1$, where
\bes
M_\ve:=\{ x\in \R^N;\, \ve<|x|<\ve^{1/2}\}.
\ees

Assertion \eqref{A.24} is proved in a similar manner using the fact that 
\bes
\|x_j\, \nabla H_\ve (x)\|_{L^\infty(\R^N)}\le \frac C{\ln 1/\ve}.\qedhere
\ees
 \end{proof}

\begin{proof}[Proof of Lemma \ref{ie1}]
We may as well work in a ball $B$ in $\R^N$. We may assume $d>0$. Fix $d$ points $P_1,\ldots, P_d$ in $B$. Consider a smooth map $T:\R^N\to\sn$ such that $T(x)=(1,0,\ldots, 0)$ when $|x|\ge 1$ and $\deg T=1$. For large $n$, let 
\bes
h(x)=\begin{cases} T(n(x-P_j)),&\text{if }|x-P_j|<1/n\text{ for some }j\\
(1,0,\ldots, 0),&\text{otherwise}
\end{cases}.
\ees

Clearly, $h$ satisfies properties {\it 1} and {\it 2}. We claim that $h$ also satisfies {\it 3}. Indeed, this is clear for $p=1$ (by scaling). When $N\ge 2$, the general case follows from Gagliardo-Nirenberg.

When $N=1$, item {\it 3} still holds, but not the above argument, since we do not have $W^{1,1}\hookrightarrow W^{1/p, p}$ when $1<p<\infty$. In order to establish item {\it 3} in $W^{1/p,p}$ with $1<p<\infty$, we fix a small $\delta>0$. Consider the intervals $I_1,\ldots, I_d$ of length $\delta$ centered at $P_1,\ldots, P_d$ and set $I_{d+1}:=B\setminus (I_1\cup\cdots \cup I_d)$. By  straightforward calculations, we have, as $n\to\infty$:
\be
\label{9a1}
\int_{I_j}\int_{I_k}\frac{|h(x)-h(y)|^p}{|y-x|^{1+(1/p)\, p}}\, dxdy=\begin{cases}
C_p+o(1),&\text{if }1\le j=k\le d\\
o(1),&\text{otherwise}
\end{cases};
\ee
this implies that $|h|_{W^{1/p, p}}^p=C_p\, d+o(1)$ and completes the proof of the lemma when $N=1$.
\end{proof}

\begin{proof}[Proof of Lemma \ref{if1}]
We may assume that $d_1\ge 0$. Consider a maximal family $(B_j)_{1\le j\le J}$ of disjoint balls in $\sn$ of radius $1/(3n)$. For large $n$ we have $J\ge d_1$. Consider a smooth map $f_n:\sn\to\sn$ such that:
\ben
\item
$f_n=(1,0,\ldots, 0)$ outside $\cup B_j$.
\item
$\deg f_n=1$ on each $B_1,\ldots, B_{d_1}$.
\item
$\deg f_n=0$ and $f_n$ is onto on each $B_{d_1+1},\ldots, B_J$. 
\een

Then clearly $f_n$ has all the required properties.
\end{proof}

Finally, we present the
\begin{proof}[Proof of Lemma \ref{ih2}]
We work on a ball $B$ containing the origin, instead of $\sn$, and when the given point is the origin.  It suffices to establish the conclusion of the lemma when $f\in\wsp(B ; \R)$ is smooth in $\overline B$ and satisfies $f(0)=0$. 
By the Sobolev embeddings, we may assume that $1<p<\infty$ and $s=1+N/p$. 

Write $f=\sum_{j=1}^N x_j g_j$, with $g_j$ smooth. This is possible since $f(0)=0$. Then 
\be
\label{ma8}
\p_k\, [(1-H_\ve)\,  f-f]=-H_\ve\, \p_k f-\sum_{j=1}^N x_j \, \p_k H_\ve\, \, g_j\to 0\text{ in }W^{N/p,p}\text{ as }\delta\to 0;
\ee
this follows from properties \eqref{A.8} and \eqref{A.9} of $H_\ve$ and from the fact that the multiplication with a fixed smooth function is continuous in $W^{N/p,p}$.

Using \eqref{ma8}, we immediately obtain that $(1-H_\ve)\, f\to f$ in $W^{1+N/p, p}$ as $\ve\to 0$. On the other hand, $(1-H_\ve)\, f$ vanishes near the origin.
%
%
%
%
 \end{proof}

\end{document}